\newtheorem{theorem}{Theorem}[section]
\newtheorem{proposition}[theorem]{Proposition}
\newtheorem{lemma}[theorem]{Lemma}
\newtheorem{corollary}[theorem]{Corollary}
\theoremstyle{remark}
\newtheorem{remark}[theorem]{Remark}
\theoremstyle{definition}
\newcommand{\R}{\mathbb{R}}
\newcommand{\T}{\mathbb{T}}
\newcommand{\Z}{\mathbb{Z}}
\newcommand{\abs}[1]{\left\lvert #1\right\rvert}
\newcommand{\dd}{\,\mathrm{d}}
\title{\LARGE\bf A Variational Approach to Planar Choreographies via Ekeland’s Principle}
\author[1]{Juan Manuel Sánchez-Cerritos\thanks{Corresponding author: \texttt{jmsc@xanum.uam.mx}}}
\author[1]{Mayte Torres-Hernández}
\affil[1]{Departamento de Matemáticas, Universidad Autónoma Metropolitana-Iztapalapa, Ciudad de México, 09310, México}
\date{}
\begin{document}
\maketitle

\begin{abstract}
We present a variational approach to obtain periodic solutions of the $N$--body problem, in particular the “figure-eight’’ solution for three equal masses. 
The central idea is to explicitly optimize the \emph{spatial scale} within the Lagrangian action, leading to the functional 
$\mathcal F = K^{\alpha/(\alpha+2)} V^{2/(\alpha+2)}$.
We prove the existence of critical points of $\mathcal F$ that enforce a curve with a single self-crossing, and show that every reparametrized critical curve satisfies Newton’s equations and is free of collisions. 
This framework recovers the Chenciner–Montgomery “eight’’ (for $\alpha=1$) and extends to the whole range $0<\alpha<2$.
\end{abstract}

% ============================================================
\section{Introduction and background}

The classical planar $N$--body problem, with positive masses $m_i>0$ and a homogeneous potential of degree $-\alpha$ ($0<\alpha\le 2$), describes the motion of point particles interacting through
\[
m_i\,\ddot q_i = \sum_{j\ne i} m_i m_j\,\frac{q_j - q_i}{|q_j-q_i|^{\alpha+2}},
\qquad i=1,\dots,N.
\]
The system is invariant under translations, rotations, and homotheties, which allows the dynamics to be reduced to the configuration space modulo these symmetries, and to focus on solutions that inherit specific geometric or temporal invariances.

\medskip
Among the most remarkable periodic solutions are the so-called \emph{choreographies}, in which all masses follow the same closed curve $\gamma(t)$, equally shifted in phase:
\[
q_i(t)=\gamma\!\left(t+\frac{2\pi i}{N}\right),\qquad i=0,\dots,N-1.
\]
For $N=3$ and equal masses, Chenciner and Montgomery~\cite{ChencinerMontgomery2000}
proved the existence of the celebrated \emph{figure-eight} orbit, a planar, collision-free loop along which the three bodies travel on the same path with a phase shift.
Their proof is variational and relies on Palais’ principle of symmetric criticality to restrict the Lagrangian action
\(
\mathcal A(x)=K(x)+V(x)
\)
to a suitable class of symmetric curves.

\medskip
The variational study of periodic solutions in the $N$--body problem goes back to the Maupertuis–Jacobi principle of least action and to the functional-analytic methods developed for Hamiltonian systems. 
A modern account of these ideas can be found in
\cite{ArnoldMMCM,AmbrosettiCotiZelati,Ekeland1990},
where the existence and compactness conditions for periodic orbits in Sobolev spaces are established.
Likewise, Gordon’s \emph{strong-force criterion}~\cite{Gordon1975}
provides an early characterization of singular potentials that still allow collision avoidance through variational minimization.

\medskip
Subsequent authors have proposed simplified or alternative formulations of the Chenciner– Montgomery scheme. 
In particular, Zhang and Zhou~\cite{ZhangZhou2002} gave a rigorous and elementary proof of the existence of a figure-eight choreography for three equal masses, based on minimizing the action within a class of odd and reflection-symmetric functions, combined with energy estimates and a local deformation argument in the spirit of Marchal~\cite{Marchal2002}.
These works firmly established the variational approach as a principal tool for constructing periodic solutions of the $N$--body problem (see also \cite{FerrarioTerracini2004,BarutelloTerracini2011,ChenOuyang2001}).
For a classification of symmetry groups in planar choreographies, see \cite{MontaldiSteckles2013}, 
and for broader expositions on the role of symmetry in periodic orbits,
\cite{ChencinerICM2002,ChencinerICMP2003}.

\medskip
A key feature of these methods is the \emph{homogeneity} of the potential, which allows the spatial scale to be separated as an independent degree of freedom.  
Indeed, for a potential of degree $-\alpha$, the Lagrangian action
\(
\mathcal A(x)=K(x)+V(x)
\)
satisfies the scaling laws $K(x_\lambda)=\lambda^2K(x)$ and $V(x_\lambda)=\lambda^{-\alpha}V(x)$.
This naturally suggests optimizing in $\lambda>0$ and introducing an intrinsic functional of the form
\[
\mathcal F(x)=K(x)^{\frac{\alpha}{\alpha+2}}\,V(x)^{\frac{2}{\alpha+2}},
\]
which is invariant under spatial homotheties.
The functional $\mathcal F$ coincides, up to a constant factor, with the minimum of the homothetic action 
$\Phi_\alpha(\lambda;x)=\lambda^2K(x)+\lambda^{-\alpha}V(x)$,
and thus gives a direct characterization of the action minimizers once the optimal scale has been eliminated.
This viewpoint unifies the traditional steps of the variational method and allows one to work conveniently on natural submanifolds such as $\{K=1\}$ or $\{V=1\}$ within the Sobolev space $H^1(\T)$
(cf.~\cite{AmbrosettiCotiZelati}).  In particular, this eliminates the need to fix $K=1$ or $V=1$ a priori, allowing the direct use of $\mathcal F$ and Ekeland’s principle without intermediate rescalings.

\medskip
In the literature, the elimination of the scale is usually implemented by fixing either $K=1$ or $V=1$ and minimizing the complementary term in a symmetric class.  
Here we take as the primary object the scale-invariant functional
\[
\mathcal F=K^{\frac{\alpha}{\alpha+2}}V^{\frac{2}{\alpha+2}},
\]
which (up to a constant) equals the minimum of the homothetic action
$\min_{\lambda>0}\Phi_\alpha(\lambda;\cdot)$.
Working directly with $\mathcal F$ makes the analysis much clearer. 
It allows one to apply Ekeland’s variational principle naturally on $\mathcal M=\{K=1\}$ and to extend Marchal’s argument against collisions within this invariant framework. 
This approach also brings out the link with Newton’s equations through a global virial identity, while the Fourier symmetries $(A_x)$ and $(Y^\star)$ yield a stronger Poincaré inequality that guarantees coercivity and compactness in $\mathcal H_{\mathrm{eight}}$.

\medskip
This unified approach yields, for $N=3$ and $0<\alpha<2$, a $2\pi$–periodic, collision-free solution possessing the symmetries of a vertical figure-eight. 
The case $\alpha=1$ reproduces the Newtonian result of Chenciner and Montgomery, while the full range $0<\alpha<2$ gives a continuous family of choreographies with the same geometric structure.

\medskip
The paper is organized as follows. 
Section~\ref{secc:planteamiento} sets up the variational framework. 
In Section~\ref{sec:apriori} we introduce the scale-invariant functional and show its equivalence with the envelope of the homothetic action $\Phi_\alpha(\lambda;x)$.
Section~\ref{sec:ekeland} applies Ekeland’s variational principle within the symmetric class
$\mathcal H_{\mathrm{eight}}$ defined by the symmetries $(A_x)$ and $(Y^\star)$.
Section~\ref{subsec:ausencia-colisiones} establishes collision avoidance following Marchal’s argument (see also \cite{Gordon1975}),
and Section~\ref{sec:fig8} constructs the smooth, collision-free periodic choreography of figure-eight type.

% ============================================================
\section{Variational setting}
\label{secc:planteamiento}

We fix the period to be $T=2\pi$ and denote $\T:=\R /(2\pi\Z)$. 
In what follows we consider three equal masses $m>0$.
For $x=(x_1,\dots,x_N):\T\to (\R^2)^N$ we define
\[
K(x):=\frac12\int_0^{2\pi}\sum_{i=1}^N m\,\abs{\dot x_i(t)}^2\,\dd t,
\qquad 
V(x):=\int_0^{2\pi} U\big(x(t)\big)\,\dd t,
\qquad 
\mathcal A(x):=K(x)+V(x),
\]
where $U(x)=\sum_{1\le i<j\le N} m^2\,\abs{x_i-x_j}^{-\alpha}$ is homogeneous of degree $-\alpha$.

\medskip
A \emph{choreography} is a loop $\gamma:\T\to\R^2$ such that
\begin{equation}\label{eq:choreo}
 x_i(t)=\gamma\!\Big(t+\frac{2\pi i}{N}\Big),\qquad i=0,\dots,N-1.
\end{equation}
In the case $N=3$, we take $i=0,1,2$ and phase shifts of $2\pi/3$.

% ------------------------------------------------------------
\subsection{Symmetric class $(A_x)$–$(Y^\star)$ for the “eight’’ (case $N=3$)}

We fix the self–intersection at the origin,
\[
\gamma(0)=(0,0),
\]
and impose the following two symmetries:
\begin{itemize}[leftmargin=1.6em]
  \item[(A$_x$)] Semi–antiperiodicity:
  \[
  \gamma(t+\pi)=(x(t),-y(t)) \quad\Rightarrow\quad x(t+\pi)=x(t),\;\; y(t+\pi)=-y(t).
  \]
  \item[(Y$^\star$)] Reflection symmetry about the $y$–axis centered at $t=\frac{\pi}{2}$:
  \[
  \gamma\!\Big(\frac{\pi}{2}+t\Big)=\big(-x(\tfrac{\pi}{2}-t),\,y(\tfrac{\pi}{2}-t)\big),
  \]
  that is, $x(\frac{\pi}{2}+t)=-x(\frac{\pi}{2}-t)$ (odd with respect to $t=\frac{\pi}{2}$)
  and $y(\frac{\pi}{2}+t)=y(\frac{\pi}{2}-t)$ (even with respect to $t=\frac{\pi}{2}$).
\end{itemize}

\medskip
Let \(f:\T\to\R\) be a \(2\pi\)-periodic function (for instance, a component \(x\) or \(y\) of \(\gamma\)).
Its Fourier expansion is
\[
f(t)=a_0+\sum_{k\ge 1}\big(a_k\cos(kt)+b_k\sin(kt)\big),
\]
with real coefficients \(a_k,b_k\).
For the vector function \(\gamma=(x,y)\) this decomposition is applied componentwise.

\begin{lemma}[Zero mean of $x$ from $Y^\star$]
\label{lem:media-x-cero}
One has $\displaystyle \int_0^{2\pi} x(t)\,dt=0$. In particular, the constant Fourier coefficient of $x$ vanishes.
\end{lemma}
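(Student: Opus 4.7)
The plan is to deduce the zero-mean property directly from the symmetry $(Y^\star)$. Either a change-of-variables argument or a Fourier computation gives the result; I sketch both.

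For the change-of-variables route, I would substitute $s=t-\pi/2$ in $\int_0^{2\pi} x(t)\,\dd t$, obtaining $\int_{-\pi/2}^{3\pi/2} x(\pi/2+s)\,\dd s$. Since $x$ is $2\pi$-periodic, the integral of $s\mapsto x(\pi/2+s)$ over any interval of length $2\pi$ is the same, in particular equal to $\int_{-\pi}^{\pi} x(\pi/2+s)\,\dd s$. The symmetry $(Y^\star)$ is precisely the statement that $g(s):=x(\pi/2+s)$ is odd, so this last integral vanishes, and the claim follows.

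For the Fourier route, I would insert the expansion $x(t)=a_0+\sum_{k\ge 1}(a_k\cos kt+b_k\sin kt)$ into the identity $x(\pi/2+t)+x(\pi/2-t)=0$ dictated by $(Y^\star)$. Using the product-to-sum formulas, the even part of $x$ about $\pi/2$ equals $2a_0+\sum_{k\ge 1}2\bigl[a_k\cos(k\pi/2)+b_k\sin(k\pi/2)\bigr]\cos(kt)$; forcing this to vanish identically gives $a_0=0$ from the constant term, hence $\int_0^{2\pi} x(t)\,\dd t=2\pi a_0=0$.

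No genuine obstacle arises: the statement is essentially a one-line consequence of the oddness of $x$ about $t=\pi/2$, and no regularity beyond $x\in L^1(\T)$ is needed for the first version. The Fourier approach, as a by-product, also yields the relations $a_k\cos(k\pi/2)+b_k\sin(k\pi/2)=0$ for every $k\ge 1$, which will presumably be reused when the admissible frequencies in $\mathcal H_{\mathrm{eight}}$ are characterized.
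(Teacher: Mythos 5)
Your proposal is correct, and your first (change-of-variables) argument is in fact slightly leaner than the paper's. The paper also rests on the oddness of $\tilde x(u)=x(\tfrac{\pi}{2}+u)$, but uses it only on $[-\tfrac{\pi}{2},\tfrac{\pi}{2}]$, which kills $\int_0^\pi x(t)\,\dd t$, and then invokes the \emph{other} symmetry $(A_x)$ (through $x(t+\pi)=x(t)$) to transfer this to $[\pi,2\pi]$. You instead use $2\pi$-periodicity to shift the whole integral to the symmetric window $[-\pi,\pi]$ about the center $t=\tfrac{\pi}{2}$, where the oddness supplied by $(Y^\star)$ --- which holds for all $t$, not only $|t|\le\tfrac{\pi}{2}$ --- finishes the argument in one stroke. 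So your version shows the lemma really does follow from $(Y^\star)$ alone (matching its title), and requires nothing beyond $x\in L^1(\T)$, whereas the paper's proof, though equally valid, quietly uses $(A_x)$ as well; its only advantage is that it works on the fundamental half-period $[0,\pi]$, which is the domain used in the subsequent Fourier analysis. Your Fourier variant is also sound: for $x\in L^2$ one equates the coefficients of the even part of $x$ about $t=\tfrac{\pi}{2}$, obtaining $a_0=0$ together with $a_k\cos(k\pi/2)+b_k\sin(k\pi/2)=0$ for $k\ge 1$, relations which indeed anticipate the admissible spectrum described in Proposition~\ref{prop:fourier-ax-y}.
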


\begin{proof}
Define $\tilde x(u):=x\!\big(\frac{\pi}{2}+u\big)$ for $u\in[-\frac{\pi}{2},\frac{\pi}{2}]$.
By $(Y^\star)$, $\tilde x(-u)=-\tilde x(u)$, i.e.\ $\tilde x$ is odd on a symmetric interval around $0$. 
Hence
\[
\int_0^{\pi} x(t)\,dt=\int_{-\pi/2}^{\pi/2} \tilde x(u)\,du=0.
\]
Using now $(A_x)$, we have $x(t+\pi)=x(t)$, so that
$\int_\pi^{2\pi} x(t)\,dt=\int_0^\pi x(t)\,dt=0$,
and adding both halves gives the result.
\end{proof}

\begin{remark}[Pointwise consequences at $t=\frac{\pi}{2}$]
From $(Y^\star)$ it follows that $x(\frac{\pi}{2})=-x(\frac{\pi}{2})$, thus $x(\frac{\pi}{2})=0$.
Differentiating, one gets $y'(\frac{\pi}{2})=0$, which means the curve crosses the $y$–axis orthogonally at the top of the upper lobe.
\end{remark}

\paragraph{Effect of the symmetries.}

\begin{proposition}[Allowed Fourier spectrum]
\label{prop:fourier-ax-y}
Under the symmetries $(A_x)$ and $(Y^\star)$ defined above, 
the $x$–component has only even modes and no constant term,
while the $y$–component has only odd modes:
\[
x(t)=\sum_{k\ge 1} a_{2k}\cos(2k\,t),\qquad
y(t)=\sum_{k\ge 0} b_{2k+1}\sin\big((2k+1)t\big).
\]
\end{proposition}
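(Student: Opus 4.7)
The plan is pure Fourier bookkeeping: write the generic real Fourier expansions of $x$ and $y$, translate each symmetry into a linear condition on their coefficients, and intersect the two resulting admissible spectra. The only tools needed are the standard shift relations $\cos(k(t+\pi)) = (-1)^k\cos(kt)$, $\sin(k(t+\pi)) = (-1)^k\sin(kt)$, and the addition formulas for a shift by $\pi/2$.

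First, I would apply $(A_x)$ alone. The relation $x(t+\pi)=x(t)$ annihilates every odd-$k$ harmonic in $x$, reducing it to a combination of $\{1,\cos(2mt),\sin(2mt):m\ge 1\}$. Dually, $y(t+\pi)=-y(t)$ annihilates every even-$k$ harmonic in $y$, including the constant term, leaving $y$ in the span of $\{\cos((2m+1)t),\sin((2m+1)t):m\ge 0\}$. The residual constant in $x$ is killed by Lemma~\ref{lem:media-x-cero}.

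Next, I would impose $(Y^\star)$. It is cleanest to change variable to $u := t-\pi/2$: the conditions become $\tilde x(-u)=-\tilde x(u)$ and $\tilde y(-u)=\tilde y(u)$, so $\tilde x$ carries only sine modes and $\tilde y$ only cosine modes in $u$. Translating each admissible $k$-harmonic back to $t$ via
\[
\cos(ku) = \cos(\tfrac{k\pi}{2})\cos(kt) + \sin(\tfrac{k\pi}{2})\sin(kt), \qquad \sin(ku) = \cos(\tfrac{k\pi}{2})\sin(kt) - \sin(\tfrac{k\pi}{2})\cos(kt),
\]
and inserting the values $\sin(m\pi)=0$, $\cos(m\pi)=(-1)^m$ for the even harmonics relevant to $x$, and $\cos((2m+1)\tfrac{\pi}{2})=0$, $\sin((2m+1)\tfrac{\pi}{2})=(-1)^m$ for the odd harmonics relevant to $y$, one reads off, coefficient by coefficient, that the spectrum reduces precisely to the two families stated.

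I do not foresee any serious obstacle: the whole argument is short, linear, and uses only standard trigonometric identities. The only point requiring care is the sign bookkeeping after the $\pi/2$ shift, but these signs collapse cleanly once the parity of $k$ is fixed, and each surviving coefficient can simply be relabelled to match the notation of the statement.
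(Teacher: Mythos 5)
Your overall method (write the general Fourier series, translate each symmetry into linear conditions on the coefficients, intersect) is sound, and it is in fact more ambitious than the paper's own proof, which only uses $(A_x)$ together with Lemma~\ref{lem:media-x-cero} to establish the mode--parity claim (even modes and no constant term for $x$, odd modes for $y$) and never addresses which of the sine or cosine modes survive. The problem is your final step. Carry the bookkeeping out for the $x$--component: after $(A_x)$, $x(t)=\sum_{m\ge 1}\bigl(a_{2m}\cos(2mt)+b_{2m}\sin(2mt)\bigr)$ up to a constant killed by the lemma, and oddness about $t=\tfrac{\pi}{2}$ says $\tilde x(u)=x(\tfrac{\pi}{2}+u)$ contains only sine modes in $u$. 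But with $u=t-\tfrac{\pi}{2}$ one has $\sin(2mu)=(-1)^m\sin(2mt)$, so translating back yields a pure \emph{sine} series in $t$: the symmetry $(Y^\star)$ annihilates the even--mode cosines $a_{2m}$ and keeps the sines $b_{2m}$. No relabelling of coefficients converts $\sin(2kt)$ into $\cos(2kt)$, so your claim that the spectrum ``reduces precisely to the two families stated'' fails for $x$; the correct outcome of your own computation is $x(t)=\sum_{k\ge 1}\tilde a_{2k}\sin(2kt)$ together with $y(t)=\sum_{k\ge 0}\tilde b_{2k+1}\sin\bigl((2k+1)t\bigr)$ (your $y$--analysis is correct).

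A quick sanity check confirms this: the paper's own example $\gamma(t)=(\sin 2t,\sin t)$ of Remark~\ref{rem:no-vaciedad} lies in $\mathcal H_{\mathrm{eight}}$, whereas $x(t)=\cos 2t$ violates $(Y^\star)$, since $x(\tfrac{\pi}{2}+t)=-\cos 2t$ while $-x(\tfrac{\pi}{2}-t)=+\cos 2t$; moreover the proof of Proposition~\ref{prop:poincare-vertical} later writes $x$ as a sine series of even modes. So what a correctly executed version of your argument actually reveals is that the cosine form displayed in the proposition is inconsistent with $(Y^\star)$ (evidently a misprint): the content the paper proves, and the part your $(A_x)$ step does recover, is the parity statement (even modes, no constant for $x$; odd modes for $y$), while your $\pi/2$--shift argument gives the genuine refinement, but with $\cos(2kt)$ replaced by $\sin(2kt)$. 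Either fix the sign bookkeeping and state that corrected conclusion explicitly, or restrict your claim to the parity statement that the paper itself proves.
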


\begin{proof}
From $(A_x)$, $x(t+\pi)=x(t)$, hence the effective period of $x$ is $\pi$, and its Fourier series on $[0,2\pi]$ contains only even multiples of the fundamental frequency. 
By Lemma~\ref{lem:media-x-cero}, the coefficient $a_0$ vanishes.
Similarly, $(A_x)$ gives $y(t+\pi)=-y(t)$, so $y$ is $\pi$–antiperiodic and its series involves only odd harmonics.
\end{proof}

\begin{remark}[Option (NC1) on $y$]
If one further imposes the condition “no first harmonic’’ on $y$ (that is, $b_1=0$), 
then the first allowed mode for $y$ becomes $k=3$.
\end{remark}

\section{A scale–invariant functional and its scale envelope}
\label{sec:apriori}

We start from the Lagrangian action
\[
\mathcal A(x)=K(x)+V(x),
\]
and consider the spatial homothetic family $x_\lambda(t):=\lambda\,x(t)$, $\lambda>0$ (time is kept fixed).
We define the \emph{scale envelope}
\begin{equation}\label{eq:defPhiA}
\Phi_\alpha(\lambda;x):=\mathcal A(x_\lambda)
=\lambda^2 K(x)+\lambda^{-\alpha}V(x).
\end{equation}

For $x\in H^1\!\big([0,2\pi];(\R^2)^N\big)$ and $x_\lambda(t)=\lambda x(t)$ one has
\[
K(x_\lambda)=\lambda^2 K(x),\qquad 
V(x_\lambda)=\lambda^{-\alpha} V(x).
\]

\medskip

If $x$ has collisions, then $V(x)$ may be $+\infty$. We therefore work in the open set $\Omega$ of collision–free curves, where $V$ is finite and $C^1$.

\medskip

Motivated by the scaling laws, we introduce
\[
\mathcal F(x):=K(x)^{\frac{\alpha}{\alpha+2}}\,V(x)^{\frac{2}{\alpha+2}},
\qquad
p:=\frac{\alpha}{\alpha+2},\ \ q:=\frac{2}{\alpha+2}.
\]

\medskip

For $x\in\Omega$, differentiating \eqref{eq:defPhiA} gives
\[
\frac{d}{d\lambda}\Phi_\alpha(\lambda;x)
=2\lambda K-\alpha \lambda^{-\alpha-1}V.
\]
The optimizer satisfies
\(
\lambda^{\alpha+2}=\dfrac{\alpha V}{2K}.
\)
Substituting into \(\Phi_\alpha\),
\[
\min_{\lambda>0}\Phi_\alpha(\lambda;x)
=\Big(1+\frac{2}{\alpha}\Big)\lambda^{2}K
= C_\alpha\,K(x)^{\frac{\alpha}{\alpha+2}}\,V(x)^{\frac{2}{\alpha+2}}
= C_\alpha\,\mathcal F(x),
\]
with, for instance,
\[
C_\alpha=\frac{\alpha+2}{2}\left(\frac{\alpha}{2}\right)^{-\frac{\alpha}{\alpha+2}}.
\]
In particular,
\begin{equation}\label{eq:minPhiF}
\min_{\lambda>0}\Phi_\alpha(\lambda;x)=C_\alpha\,\mathcal F(x).
\end{equation}

\subsubsection*{Minimization equivalences}
Let $\mathcal H$ be a class of curves stable under homotheties (e.g.\ $H^1(\T)$ or the symmetric class $\mathcal H_{\mathrm{eight}}$).

\begin{proposition}[Equivalences]\label{prop:equivalencias}
\begin{enumerate}[label=(\roman*),leftmargin=1.4em]
\item $\displaystyle \inf_{x\in\mathcal H}\mathcal F(x)=\inf\{V(x)^q:\ x\in\mathcal H,\ K(x)=1\}$.
\item $\displaystyle \inf_{x\in\mathcal H}\mathcal F(x)=\inf\{K(x)^p:\ x\in\mathcal H,\ V(x)=1\}$.
\end{enumerate}
\end{proposition}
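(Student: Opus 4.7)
The plan is to exploit two facts that are already available: the scale-invariance of $\mathcal F$ under the homothety $x\mapsto x_\lambda=\lambda x$, and the hypothesis that $\mathcal H$ is stable under this action. Together these say that every admissible curve has a representative on each of the slices $\{K=1\}$ and $\{V=1\}$ with the same value of $\mathcal F$, and that on those slices $\mathcal F$ collapses to $V^q$ or $K^p$ respectively. From here, (i) and (ii) follow by one rescaling and two routine inequalities.

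First I would verify that $\mathcal F(x_\lambda)=\mathcal F(x)$ for all $\lambda>0$. Using the scaling rules already recorded, $\mathcal F(x_\lambda)=\lambda^{2p-\alpha q}\mathcal F(x)$, and the exponent $2p-\alpha q$ vanishes because $p=\alpha/(\alpha+2)$ and $q=2/(\alpha+2)$ force $2p=\alpha q$.

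For (i) I would then prove the two inequalities separately. Given $x\in\mathcal H$ with $K(x)>0$, set $\lambda:=K(x)^{-1/2}$; by homothety-stability $y:=x_\lambda\in\mathcal H$, $K(y)=1$, and by invariance $\mathcal F(x)=\mathcal F(y)=V(y)^q$. Taking infima gives $\inf_{\{K=1\}\cap\mathcal H}V^q\le\inf_{\mathcal H}\mathcal F$. The reverse inequality is immediate, since $\{K=1\}\cap\mathcal H\subset\mathcal H$ and $\mathcal F=V^q$ on that slice. For (ii) the same argument applies with $\lambda:=V(x)^{1/\alpha}$, which rescales $V$ to $1$ and leaves $\mathcal F=K^p$ on the corresponding slice.

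The only step that requires a word of care — and which I view as the main (and small) obstacle — is justifying that $K(x)>0$ (respectively $V(x)>0$) on the admissible class, so that the rescaling is well defined. Positivity of $V$ is automatic on the collision-free set $\Omega$ where $\mathcal F$ is defined; and a constant choreographic loop with $K=0$ would force all three bodies to coincide, i.e.\ a total collision, which is excluded from $\Omega$. Hence $K>0$ throughout $\mathcal H\cap\Omega$ and both reductions are legitimate. Apart from this check, the content of the proof is a one-line scaling computation.
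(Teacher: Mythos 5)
Your argument is correct and follows essentially the same route as the paper: rescale by $\lambda=K(x)^{-1/2}$ (resp.\ $\lambda=V(x)^{1/\alpha}$) to land on the slice $\{K=1\}$ (resp.\ $\{V=1\}$), use scale invariance of $\mathcal F$, and note that $\mathcal F$ reduces to $V^q$ (resp.\ $K^p$) there. The paper's proof is simply terser, handling the degeneracy issue by the phrase ``if $x$ is not constant'' where you spell out why $K>0$ and $V>0$ on the admissible class.
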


\begin{proof}
If $x$ is not constant, taking $\lambda=K(x)^{-1/2}$ yields $K(x_\lambda)=1$ and 
$\mathcal F(x)=V(x_\lambda)^q$. Similarly, choosing $\lambda=V(x)^{1/\alpha}$ gives $V(x_\lambda)=1$ and $\mathcal F(x)=K(x_\lambda)^p$.
\end{proof}

\begin{lemma}[Scale invariance]
For every $\lambda>0$, $\mathcal F(x_\lambda)=\mathcal F(x)$.
\end{lemma}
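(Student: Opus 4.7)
The plan is straightforward: substitute the two scaling identities $K(x_\lambda)=\lambda^2 K(x)$ and $V(x_\lambda)=\lambda^{-\alpha}V(x)$ (already recorded just above the definition of $\mathcal F$) into $\mathcal F(x_\lambda)=K(x_\lambda)^{p}V(x_\lambda)^{q}$ and track the exponent of $\lambda$. Factoring out the $\lambda$-dependence from both factors yields $\mathcal F(x_\lambda)=\lambda^{2p-\alpha q}\,\mathcal F(x)$, so the lemma reduces to the arithmetic identity $2p-\alpha q=0$.

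The key computation is then
\[
2p-\alpha q \;=\; 2\cdot\frac{\alpha}{\alpha+2}-\alpha\cdot\frac{2}{\alpha+2}\;=\;\frac{2\alpha-2\alpha}{\alpha+2}\;=\;0.
\]
This is precisely the condition that motivated the choice of $(p,q)$: among exponent pairs normalized by $p+q=1$, the combination $\bigl(\alpha/(\alpha+2),\,2/(\alpha+2)\bigr)$ is singled out by requiring $K^{p}V^{q}$ to be invariant under the homothety action, since the scaling weights of $K$ and $V$ are $+2$ and $-\alpha$ and must balance.

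There is no real obstacle here: the statement is a bookkeeping identity that formalizes the motivating discussion preceding the definition of $\mathcal F$. Its significance lies not in the proof but in its consequences --- it certifies that $\mathcal F$ descends to the quotient by spatial homotheties and that the infimum may be computed on either natural slice $\{K=1\}$ or $\{V=1\}$ as in Proposition~\ref{prop:equivalencias}. It is also the structural reason why one may apply Ekeland's variational principle to $\mathcal F$ on $\mathcal M=\{K=1\}$ without any drift along the one-parameter family $x_\lambda$, since the scale degree of freedom has already been factored out.
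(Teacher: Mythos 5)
Your proof is correct and follows exactly the paper's argument: substitute the scaling laws $K(x_\lambda)=\lambda^2K(x)$, $V(x_\lambda)=\lambda^{-\alpha}V(x)$ into $\mathcal F(x_\lambda)=K(x_\lambda)^pV(x_\lambda)^q$ and observe that $2p-\alpha q=0$. Nothing further is needed.
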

\begin{proof}
Since $K(x_\lambda)=\lambda^2K(x)$ and $V(x_\lambda)=\lambda^{-\alpha}V(x)$, we have
$\mathcal F(x_\lambda)=\lambda^{2p-\alpha q}\mathcal F(x)=\mathcal F(x)$, because $2p-\alpha q=0$.
\end{proof}

\subsection{From the scale–invariant functional to Newton’s equation}

We work on the collision–free open set
\[
  \Omega:=\{x\in\mathcal H:\ x(t)\ \text{has no collisions for all }t\in[0,2\pi]\}.
\]
On $\Omega$ the potential $U$ is $C^\infty$, hence $K,V\in C^1(\Omega)$.

Let $\mathcal H_{\mathrm{eight}}$ be the symmetric class determined by $(A_x)$ and $(Y^\star)$.
We denote its tangent space by
\[
  T_x\mathcal H_{\mathrm{eight}}
  :=\{\eta\in H^1:\ \eta \text{ satisfies, to first order, the same symmetries and linear constraints as }x\}.
\]

\begin{remark}[Nonemptiness of the symmetric class]
\label{rem:no-vaciedad}
The class $\mathcal{H}_{\mathrm{eight}}$ is nonempty. 
For instance,
\[
\gamma(t)=(\sin 2t,\sin t),\qquad t\in[0,2\pi],
\]
satisfies
\[
\gamma(t+\pi)=(\sin 2t,-\sin t)\quad\text{(that is, $(A_x)$),}
\]
and
\[
\gamma\Big(\tfrac{\pi}{2}+t\Big)=\big(\sin(2t+\pi),\sin(t+\tfrac{\pi}{2})\big)
   =\big(-\sin 2t,\cos t\big)
   =\big(-x(\tfrac{\pi}{2}-t),\,y(\tfrac{\pi}{2}-t)\big),
\]
since $x(\tfrac{\pi}{2}-t)=\sin(\pi-2t)=\sin 2t$ and 
$y(\tfrac{\pi}{2}-t)=\sin(\tfrac{\pi}{2}-t)=\cos t$. 
The trace has two symmetric lobes crossing transversally at the origin, forming a vertical “8”. 
For three equal masses, $x_i(t)=\gamma(t+\tfrac{2\pi i}{3})$ is a choreography in $\mathcal H_{\mathrm{eight}}$; 
moreover, for a small rescaling $\varepsilon\,\gamma$ there are no collisions (phase shift $2\pi/3$).
\end{remark}

With this preliminary check (nonemptiness) in hand, we compute the first variation of $\mathcal F$
within $\mathcal H_{\mathrm{eight}}$, using admissible variations 
$\eta\in T_x\mathcal H_{\mathrm{eight}}$ (a closed linear subspace).

For admissible $\eta=(\eta_i)$ we obtain
\begin{align}
  \delta K(x)[\eta]
   &= \frac12\int_0^{2\pi}\sum_i m\,\frac{d}{d\varepsilon}\Big|_{\varepsilon=0}\!|\dot x_i+\varepsilon\dot\eta_i|^2\,dt
    = \int_0^{2\pi}\sum_i m\,\dot x_i\cdot\dot\eta_i\,dt \notag\\
   &= -\int_0^{2\pi}\sum_i m\,\ddot x_i\cdot \eta_i\,dt,
   \label{eq:deltaK-strong}
\end{align}
by integration by parts and periodicity (boundary terms vanish).
For the potential,
\begin{equation}\label{eq:deltaV-strong}
  \delta V(x)[\eta]
  = \int_0^{2\pi}\sum_i \nabla_{x_i}U(x(t))\cdot \eta_i(t)\,dt,
\end{equation}
since $U$ is $C^1$ away from collisions. By the chain rule,
\begin{equation}\label{eq:D-F-strong}
  D\mathcal F(x)[\eta]
  = p\,K^{p-1}V^{q}\,\delta K(x)[\eta]
    + q\,K^{p}V^{q-1}\,\delta V(x)[\eta].
\end{equation}

If $x$ is critical for $\mathcal F$ on $\Omega\cap\mathcal H_{\mathrm{eight}}$, 
then $D\mathcal F(x)[\eta]=0$ for every $\eta\in T_x\mathcal H_{\mathrm{eight}}$. 
Using \eqref{eq:deltaK-strong}–\eqref{eq:D-F-strong} we obtain
\[
  \int_0^{2\pi}\sum_i\Big(-pK^{p-1}V^{q}\,m\,\ddot x_i
  + qK^{p}V^{q-1}\,\nabla_{x_i}U(x)\Big)\cdot \eta_i\,dt=0.
\]
Comparing coefficients of $\eta$ in \eqref{eq:D-F-strong}, one infers the existence of scalar multipliers $c_1$ and $c_2$ such that

\begin{equation}\label{eq:EL-weak-final}
  -\,c_1\,\ddot x_i(t)+c_2\,\nabla_{x_i}U\big(x(t)\big)=0,
  \qquad c_1:=pK^{p-1}V^q m,\; c_2:=qK^{p}V^{q-1}.
\end{equation}

\begin{remark}[Constraint qualification]
\label{rem:cualificacion}
On $\mathcal{M}=\{K=1\}$, the differential of $K$ does not vanish. 
Indeed, for $x\in\mathcal{M}$ and any $\eta\in H^1$,
\[
DK(x)[\eta] = \int_0^{2\pi} \sum_i m\,\dot x_i\cdot\dot\eta_i \,\dd t.
\]
If $DK(x)=0$, taking $\eta=x$ gives $2K(x)=0$, contradicting $K=1$. 
Therefore $DK(x)\neq 0$ on $\mathcal{M}$, so the qualification hypothesis of the Lagrange multiplier theorem holds.
\end{remark}

Setting $\rho:=c_2/c_1>0$, the previous equation becomes
\begin{equation}\label{eq:newton-lambda-final}
  m\,\ddot x_i(t)=\rho\,\nabla_{x_i}U\big(x(t)\big),
\end{equation}
a reparametrized form of Newton’s equations.

Multiplying \eqref{eq:newton-lambda-final} by $x_i$, summing in $i$, and integrating over $[0,2\pi]$ we get
\[
  \sum_i m\int_0^{2\pi}\ddot x_i\cdot x_i\,dt
  = -2K(x), \qquad
  \sum_i\int_0^{2\pi}\nabla_{x_i}U(x)\cdot x_i\,dt=-\alpha\,V(x),
\]
(the first identity uses periodicity; the second uses the homogeneity of $U$).
Comparing both identities yields
\begin{equation}\label{eq:virial-final}
  2K(x)=\alpha\,\rho\,V(x),
\end{equation}
which shows that $\rho=\frac{2K}{\alpha V}$ is constant (it depends only on the orbit integrals).
With the change of variables $s=\sqrt{\rho}\,t$ we reach the standard form
\begin{equation}\label{eq:newton-standard-final}
  m\,\frac{d^2x_i}{ds^2}=\nabla_{x_i}U\big(x(s)\big),
\end{equation}
namely, Newton’s classical equation.

\begin{remark}[Equivalence of critical points of $\mathcal A$ and $\mathcal F$]
Although the functionals $\mathcal A$ and $\mathcal F$ are not identical, 
they have the same critical points up to time–reparametrization.
Indeed, every critical point of $\mathcal A$ is mapped to a critical point 
of $\mathcal F$ by choosing the minimising scale $\lambda>0$, and conversely
every critical point of $\mathcal F$ yields, after the rescaling 
$s=\lambda t$, a solution of Newton’s equation. 
Thus $\mathcal F$ is a scale–invariant reformulation of the classical 
action functional $\mathcal A$, adapted to the variational treatment of choreographies.
\end{remark}

\begin{remark}[Regularity and energy]
Away from collisions, $U$ is smooth; therefore any weak solution of \eqref{eq:newton-standard-final}
is smooth by bootstrap. Moreover, the mechanical energy
\(E=\tfrac12\sum_i m|\dot x_i|^2-U(x)\)
is conserved in the reparametrized time $s$.
\end{remark}

\begin{proposition}[Reinforced Poincaré inequality for the vertical “8’’]
\label{prop:poincare-vertical}
Let $\gamma=(x,y)\in H^1(\T;\R^2)$ be a $2\pi$–periodic curve satisfying
\[
(A_x)\ \ \gamma(t+\pi)=(x(t),-y(t)),\qquad
(Y^\star)\ \ \gamma\!\Big(\tfrac{\pi}{2}+t\Big)=\big(-x(\tfrac{\pi}{2}-t),\,y(\tfrac{\pi}{2}-t)\big).
\]
Then
\[
\int_{0}^{2\pi}\!|\gamma(t)|^2\,dt\ \le\ \int_{0}^{2\pi}\!|\dot\gamma(t)|^2\,dt.
\]
\end{proposition}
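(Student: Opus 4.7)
The plan is to prove the inequality term by term in the Fourier basis, leveraging the spectral restrictions already established in Proposition~\ref{prop:fourier-ax-y}. The idea is that both symmetries $(A_x)$ and $(Y^\star)$ together forbid the lowest harmonics of $\gamma$, so that the sharp constant in the usual Wirtinger–Poincaré inequality on the circle (namely $1$, attained by $\sin t$ and $\cos t$) is already enough to pass from the weaker bound $\int|\gamma|^2\le C\int|\dot\gamma|^2$ (for mean-zero curves) to the cleaner estimate with constant $1$.

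First, I would invoke Proposition~\ref{prop:fourier-ax-y} to write
\[
 x(t)=\sum_{k\ge 1} a_{2k}\cos(2kt), \qquad
 y(t)=\sum_{k\ge 0} b_{2k+1}\sin\big((2k+1)t\big),
\]
so that in particular both components have zero mean (for $x$ this is Lemma~\ref{lem:media-x-cero}; for $y$ it follows from the antiperiodicity $(A_x)$). Parseval's identity on $[0,2\pi]$ then gives
\[
 \int_0^{2\pi}\!x(t)^2\,dt=\pi\!\sum_{k\ge 1}a_{2k}^2, \qquad
 \int_0^{2\pi}\!\dot x(t)^2\,dt=\pi\!\sum_{k\ge 1}(2k)^2 a_{2k}^2,
\]
and analogously for $y$ with frequencies $(2k{+}1)$.

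Second, I would compare the two series frequency by frequency. For the $x$-component the lowest admissible frequency is $2$, so $(2k)^2\ge 4\ge 1$ for every $k\ge 1$; for the $y$-component the lowest admissible frequency is $1$, so $(2k+1)^2\ge 1$ for every $k\ge 0$. Adding the two component inequalities yields
\[
 \int_0^{2\pi}\!|\gamma|^2\,dt
 =\int_0^{2\pi}\!x^2\,dt+\int_0^{2\pi}\!y^2\,dt
 \le\int_0^{2\pi}\!\dot x^2\,dt+\int_0^{2\pi}\!\dot y^2\,dt
 =\int_0^{2\pi}\!|\dot\gamma|^2\,dt,
\]
which is the claim.

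There is no real analytic obstacle: the only thing to be careful about is to justify Parseval's formula for $H^1$-functions (standard, since $H^1(\T)\hookrightarrow L^2(\T)$ with $\|\dot f\|_{L^2}^2=\sum k^2(a_k^2+b_k^2)\cdot\pi$), and to use Proposition~\ref{prop:fourier-ax-y} rather than re-deriving the spectral restriction. One could also record as a remark that equality in the $y$-direction is attained exactly when $y(t)=b_1\sin t$, while the $x$-direction always contributes a strict gap (factor $4$), a fact that will be useful later for coercivity of $\mathcal F$ on $\mathcal H_{\mathrm{eight}}$.
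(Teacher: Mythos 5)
Your proof is correct and follows essentially the same route as the paper: restrict the Fourier spectrum via the symmetries, apply Parseval componentwise (frequency $\ge 2$ for $x$, odd frequency $\ge 1$ for $y$), and add the two bounds. The only cosmetic difference is that you cite Proposition~\ref{prop:fourier-ax-y} for the spectral restriction while the paper re-derives it inside the proof; the inequality depends only on the frequency support, so this changes nothing.
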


\begin{proof}
\emph{(Fourier structure).}
From $(A_x)$ we have $x(t+\pi)=x(t)$ and $y(t+\pi)=-y(t)$, hence $x$ contains only \emph{even} harmonics and $y$ only \emph{odd} ones.
Moreover, $(Y^\star)$ says that $x$ is odd and $y$ even with respect to $t=\tfrac{\pi}{2}$; shifting the Fourier series to that center removes cosines in $x$ and sines in $y$ (in the shifted sense). Returning to the origin, we may write with real coefficients
\[
x(t)=\sum_{\substack{k\ge 2\\ k \text{ even}}} \tilde a_k\,\sin(kt),\qquad
y(t)=\sum_{\substack{k\ge 1\\ k \text{ odd}}} \tilde b_k\,\sin(kt).
\]
In particular, $x$ has no constant term (no $k=0$ mode).

\medskip
\emph{(Parseval and componentwise bounds).}
For $2\pi$–periodic functions,
\[
\int_{0}^{2\pi}\!|f|^2\,dt=\pi\sum_{k\in\Z}\big(|a_k|^2+|b_k|^2\big),\qquad
\int_{0}^{2\pi}\!|f'|^2\,dt=\pi\sum_{k\in\Z}k^2\big(|a_k|^2+|b_k|^2\big).
\]
Applying this to $x$ (only even sines with $k\ge 2$) gives
\[
\int_{0}^{2\pi}\!|x|^2
=\pi\sum_{\substack{k\ge 2\\ k \text{ even}}} |\tilde a_k|^2
\le \frac{\pi}{4}\sum_{\substack{k\ge 2\\ k \text{ even}}} k^2 |\tilde a_k|^2
=\frac{1}{4}\int_{0}^{2\pi}\!|x'|^2,
\]
since $k^2\ge 4$ for even $k\ge 2$. For $y$ (only odd sines with $k\ge 1$),
\[
\int_{0}^{2\pi}\!|y|^2
=\pi\sum_{\substack{k\ge 1\\ k \text{ odd}}} |\tilde b_k|^2
\le \pi\sum_{\substack{k\ge 1\\ k \text{ odd}}} k^2 |\tilde b_k|^2
=\int_{0}^{2\pi}\!|y'|^2,
\]
since $k^2\ge 1$ for odd $k\ge 1$.

\medskip
\emph{(Vector sum).}
Adding and using $|\gamma|^2=|x|^2+|y|^2$, $|\dot\gamma|^2=|x'|^2+|y'|^2$,
\[
\int_{0}^{2\pi}\!|\gamma|^2
\le \frac{1}{4}\int_{0}^{2\pi}\!|x'|^2+\int_{0}^{2\pi}\!|y'|^2
\le \int_{0}^{2\pi}\!\big(|x'|^2+|y'|^2\big)
=\int_{0}^{2\pi}\!|\dot\gamma|^2.
\]
\end{proof}

\medskip
\noindent
As an immediate consequence, for equal masses $m>0$ one obtains an explicit kinetic coercivity:

\begin{corollary}[Coercivity in terms of $K$]
\label{cor:coercividad-K}
If $\gamma=(x,y)\in\mathcal H_{\mathrm{eight}}$ satisfies $(A_x)$ and $(Y^\star)$, then
\[
\int_{0}^{2\pi}\!|\gamma(t)|^2\,dt
\ \le\
\int_{0}^{2\pi}\!|\dot\gamma(t)|^2\,dt
\ =\ \frac{2}{m}\,K(\gamma).
\]
In particular, on the sublevel $K=1$ one has the uniform bound
\[
\|\gamma\|_{L^2}^2\ \le\ \frac{2}{m}.
\]
\end{corollary}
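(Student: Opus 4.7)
The plan is to reduce the corollary to a direct application of Proposition~\ref{prop:poincare-vertical}, combined with a bookkeeping step translating the Dirichlet integral into the kinetic energy $K(\gamma)$. No new analytic ingredient beyond the reinforced Poincaré inequality is required.

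First I would note that, by hypothesis, $\gamma\in\mathcal H_{\mathrm{eight}}$ satisfies both $(A_x)$ and $(Y^\star)$, so Proposition~\ref{prop:poincare-vertical} applies verbatim and gives
\[
\int_{0}^{2\pi}|\gamma(t)|^2\,\dd t \ \le\ \int_{0}^{2\pi}|\dot\gamma(t)|^2\,\dd t,
\]
which is exactly the Poincaré-type half of the statement.

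Next I would unpack the definition of $K$ restricted to the single loop $\gamma$ carrying the common mass $m$. Under the equal-mass choreographic normalization the kinetic term reads $K(\gamma)=\tfrac{m}{2}\int_{0}^{2\pi}|\dot\gamma(t)|^2\,\dd t$, whence $\int_{0}^{2\pi}|\dot\gamma|^2\,\dd t=\tfrac{2}{m}K(\gamma)$. Substituting this identity on the right-hand side of the previous estimate yields the full chain of inequalities and equalities stated in the corollary. Specializing to the level set $\mathcal M=\{K=1\}$ and inserting $K(\gamma)=1$ gives the uniform bound $\|\gamma\|_{L^2}^2\le 2/m$.

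There is essentially no obstacle here; the corollary is a mechanical specialization of Proposition~\ref{prop:poincare-vertical}. The only point worth flagging is the consistent identification of $K(\gamma)$ with the single-loop expression $\tfrac{m}{2}\int|\dot\gamma|^2\,\dd t$, i.e.\ the convention induced by reducing the three-body kinetic energy to one loop of mass $m$; once this normalization is fixed, the constant $2/m$ falls out automatically and the uniform $L^2$ bound on $\mathcal M$ follows at once.
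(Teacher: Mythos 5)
Your argument is exactly the paper's proof: cite Proposition~\ref{prop:poincare-vertical} for the inequality and use the single-loop identity $K(\gamma)=\tfrac{m}{2}\int_0^{2\pi}|\dot\gamma|^2\,\dd t$ to obtain the factor $2/m$, then evaluate on $K=1$. The remark on the normalization convention is a sensible clarification but adds nothing beyond what the paper already does.
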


\begin{proof}
The identity $\int |\dot\gamma|^2 = \tfrac{2}{m}K(\gamma)$ follows from the definition of the kinetic energy,
\[
K(\gamma)=\frac{m}{2}\int_0^{2\pi}|\dot\gamma(t)|^2\,dt,
\]
and the inequality is precisely Proposition~\ref{prop:poincare-vertical}.
\end{proof}

If one also imposes \emph{(NC1)} on $y$ (removing the $k=1$ mode), then
\(\int|y|^2\le \tfrac{1}{9}\int|y'|^2\).
The global constant remains $1/4$ due to the $x$–component, whose first allowed mode is $k=2$.

\medskip

By Proposition~\ref{prop:equivalencias}, minimizing $\mathcal F$ is equivalent to minimizing $V^q$ on $\mathcal M=\{K=1\}$ within $\mathcal H_{\mathrm{eight}}$.
 
The tangent gradient of $V^q$ along $\mathcal M$ is the orthogonal projection (for the $H^1$ metric) 
of the linear functional \eqref{eq:deltaV-strong} onto $T_x\mathcal M$, and the Lagrange multiplier enforcing $K=1$ turns out to be precisely $\rho=2K/(\alpha V)$.

\medskip

The previous result shows that, within the symmetric class $\mathcal H_{\mathrm{eight}}$,
the kinetic functional $K$ controls the $L^2$ norm of the trajectory, i.e.\ the kinetic energy enforces geometric coercivity on the curve.
In particular, on the sublevel $K=1$ the family of curves is uniformly bounded in $H^1(\T)$.

Together with the closed character of the symmetries $(A_x),(Y^\star)$
and the compact embedding $H^1(\T)\hookrightarrow C^0(\T)$,
this yields the completeness and lower semicontinuity properties required to apply Ekeland’s principle
to the functional
\[
f:=\mathcal F|_{\mathcal M}=V^q,
\qquad
\mathcal M=\{x\in\mathcal H_{\mathrm{eight}}:\ K(x)=1\}.
\]
We proceed to state these properties.

\medskip

The coercivity established in Proposition~\ref{prop:poincare-vertical}
guarantees that $\mathcal M$ is bounded in $H^1$, and in what follows
we verify the analytical conditions needed to apply Ekeland’s principle to
\[
f(x)=V(x)^q,\qquad q=\tfrac{2}{\alpha+2}.
\]

\begin{lemma}[Completeness and boundedness of $\mathcal M$]
\label{lem:completez-M}
With $\mathcal H_{\mathrm{eight}}$ the symmetric class and $\mathcal M=\{x\in\mathcal H_{\mathrm{eight}}:K(x)=1\}$,
one has: (i) $\mathcal M$ is closed in $H^1(\T)$ and hence complete; 
(ii) by Proposition~\ref{prop:poincare-vertical}, $\mathcal M$ is bounded in $H^1$; 
(iii) the embedding $H^1(\T)\hookrightarrow C^0(\T)$ is compact, so every sequence in $\mathcal M$ has a uniformly convergent subsequence.
\end{lemma}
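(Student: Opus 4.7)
The plan is to verify the three claims in turn, combining the coercivity supplied by Proposition~\ref{prop:poincare-vertical} and Corollary~\ref{cor:coercividad-K} with standard one-dimensional Sobolev embedding facts on the circle $\T$.

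For (i), I would write $\mathcal{M}$ as the intersection of two $H^1$-closed sets: the symmetric class $\mathcal{H}_{\mathrm{eight}}$ and the level set $K^{-1}(\{1\})$. Each of the symmetries $(A_x)$ and $(Y^\star)$, together with the normalisation $\gamma(0)=0$, is a pointwise linear identity of the form $\gamma(t_1)=L\,\gamma(t_2)$ at prescribed times $t_1,t_2\in\T$ and a fixed linear map $L$. Since strong $H^1$-convergence on $\T$ implies uniform convergence through the continuous embedding $H^1(\T)\hookrightarrow C^0(\T)$, each of these pointwise identities is stable under $H^1$-limits, so $\mathcal{H}_{\mathrm{eight}}$ is a strongly closed linear subspace. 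For the second factor, $K(\gamma)=\tfrac{m}{2}\int_0^{2\pi}|\dot\gamma|^2\,\dd t$ is a continuous quadratic form on $H^1(\T;\R^2)$, hence $K^{-1}(\{1\})$ is closed. Being a closed subset of the complete space $H^1(\T;\R^2)$, $\mathcal{M}$ is itself complete in the induced distance.

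For (ii), I would invoke Corollary~\ref{cor:coercividad-K} directly: the constraint $K(\gamma)=1$ fixes $\|\dot\gamma\|_{L^2}^2=2/m$, and Proposition~\ref{prop:poincare-vertical} then yields $\|\gamma\|_{L^2}^2\le \|\dot\gamma\|_{L^2}^2=2/m$, so $\|\gamma\|_{H^1}^2\le 4/m$ uniformly over $\mathcal{M}$. For (iii), the compactness of $H^1(\T)\hookrightarrow C^0(\T)$ is classical in one dimension: $H^1$ loops are $1/2$-Hölder continuous, so the uniform $H^1$-bound from (ii) provides both pointwise boundedness and equicontinuity, and Arzelà–Ascoli extracts a uniformly convergent subsequence from any sequence in $\mathcal{M}$.

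No single step is a substantial obstacle, since the lemma is essentially a bookkeeping result that assembles closure, boundedness, and compactness into the form required by Ekeland's principle. The mildest subtlety is in (i): because $(A_x)$ and $(Y^\star)$ are pointwise conditions, one must explicitly invoke the $H^1\hookrightarrow C^0$ embedding to see that they survive $H^1$-convergence; the remaining steps reduce to direct applications of results stated earlier in the excerpt.
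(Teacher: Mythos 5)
Your proposal is correct and follows essentially the same route as the paper's proof: closedness of the linear symmetry constraints and of the level set $\{K=1\}$ for (i), the reinforced Poincaré inequality with $K=1$ for the uniform $H^1$ bound in (ii), and the compact embedding $H^1(\T)\hookrightarrow C^0(\T)$ for (iii). Your version merely adds detail the paper leaves implicit (the $C^0$ embedding to pass pointwise symmetry identities to the limit, the explicit mass constant $2/m$, and the Arzelà--Ascoli justification of compactness), which is fine.
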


\begin{proof}
(i) The defining symmetries of $\mathcal H_{\mathrm{eight}}$ are linear and closed in $H^1(\T)$; the constraint $K=1$ is closed (continuity of $K$ in $H^1$). Hence $\mathcal M$ is closed and, since $H^1$ is complete, so is $\mathcal M$. 
(ii) If $x\in\mathcal M$, then $\int|\dot\gamma|^2=2K=2$. 
By Proposition~\ref{prop:poincare-vertical}, $\int|\gamma|^2\le \int|\dot\gamma|^2\le 2$, 
so $\|x\|_{H^1}^2\le 4$, i.e.\ $\mathcal M$ lies in a fixed ball of $H^1$.

(iii) The embedding $H^1(\T)\hookrightarrow C^0(\T)$ is compact; hence every sequence in $\mathcal M$ admits a uniformly convergent subsequence (and a weakly convergent subsequence in $H^1$).
\end{proof}

\begin{lemma}[Lower semicontinuity of $V^q$ on $\mathcal M$]
\label{lem:lsc-Vq}
Extending $U$ by $+\infty$ at collisions, the resulting function is l.s.c.
If $x_n\to x$ in $H^1(\T)$, then (after extraction) $x_n\to x$ uniformly and
\[
V(x)\le \liminf_{n\to\infty}V(x_n)\quad\Rightarrow\quad V(x)^q\le \liminf_{n\to\infty}V(x_n)^q.
\]
In particular, $f:=V^q$ is l.s.c.\ and $f\ge 0$, hence bounded from below.
\end{lemma}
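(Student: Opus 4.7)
The plan is to reduce everything to Fatou's lemma applied to the pointwise potential. First I would verify that the map $U:(\R^2)^N\to[0,+\infty]$, defined by $U(y)=\sum_{i<j}m^2|y_i-y_j|^{-\alpha}$ on the collision-free configurations and extended by $+\infty$ on the collision set, is lower semicontinuous. Each summand $|y_i-y_j|^{-\alpha}$ is continuous on $\{y_i\neq y_j\}$ and tends to $+\infty$ as $y_i\to y_j$; hence the sum, with the $+\infty$ convention on collisions, is l.s.c., and in particular non-negative.

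Next, given $x_n\to x$ in $H^1(\T;(\R^2)^N)$, I would use the compact embedding $H^1(\T)\hookrightarrow C^0(\T)$ from Lemma~\ref{lem:completez-M}(iii) to extract a subsequence (still denoted $x_n$) with $x_n\to x$ uniformly on $[0,2\pi]$. In particular $x_n(t)\to x(t)$ pointwise for every $t$, so by the lower semicontinuity of $U$,
\[
U\bigl(x(t)\bigr)\;\le\;\liminf_{n\to\infty} U\bigl(x_n(t)\bigr)\qquad\text{for every }t\in[0,2\pi].
\]
Since $U\ge 0$, Fatou's lemma applies to $t\mapsto U(x_n(t))$ on $[0,2\pi]$ and yields
\[
V(x)=\int_0^{2\pi} U\bigl(x(t)\bigr)\,\dd t
\;\le\; \liminf_{n\to\infty}\int_0^{2\pi} U\bigl(x_n(t)\bigr)\,\dd t
\;=\; \liminf_{n\to\infty} V(x_n),
\]
where both sides are well-defined in $[0,+\infty]$. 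This inequality is valid whether or not the limit curve $x$ has collisions: if it does, the left-hand side is $+\infty$ and so is the liminf on the right.

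Finally, the map $s\mapsto s^q$ is continuous and strictly increasing on $[0,+\infty]$ (with the convention $(+\infty)^q=+\infty$), so monotonicity transfers the inequality to
\[
V(x)^q\;\le\;\liminf_{n\to\infty} V(x_n)^q,
\]
which is the asserted lower semicontinuity of $f=V^q$ on $\mathcal M$. Non-negativity $f\ge 0$ and the resulting bound $\inf_{\mathcal M}f\ge 0$ are immediate from $V\ge 0$. The only mildly delicate point is the combined appeal to compact embedding plus Fatou when the limit may be a collision configuration; the extended l.s.c.\ convention on $U$ is precisely what makes the argument go through uniformly in both cases.
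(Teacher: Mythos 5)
Your proposal is correct and follows essentially the same route as the paper: extend $U$ by $+\infty$ at collisions and note it is l.s.c., extract a uniformly convergent subsequence via the compact embedding $H^1(\T)\hookrightarrow C^0(\T)$, apply pointwise lower semicontinuity and Fatou's lemma to get $V(x)\le\liminf_n V(x_n)$, and conclude for $V^q$ by monotonicity and continuity of $r\mapsto r^q$. No gaps to report.
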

\begin{proof}
The statement relies on two standard facts: (a) the extension by $+\infty$ of a repulsive singular potential is lower semicontinuous, and (b) the compact embedding $H^1(\T)\hookrightarrow C^0(\T)$ ensures uniform convergence along a subsequence.

Write the extended potential \(\widetilde U:(\R^{2})^N\to[0,+\infty]\) as
\(\widetilde U(z)=U(z)\) if \(z\notin\Delta\) (no collision) and \(\widetilde U(z)=+\infty\) if \(z\in\Delta\) (some collision).
Then \(\widetilde U\) is l.s.c.\ (indeed, \(\widetilde U(z)\to+\infty\) as one approaches \(\Delta\)).
Let \(x_n\to x\) in \(H^1(\T)\). By the compact embedding \(H^1(\T)\hookrightarrow C^0(\T)\),
after extraction we may assume \(x_n\to x\) uniformly. For each \(t\),
pointwise l.s.c.\ gives
\[
\widetilde U\big(x(t)\big)\ \le\ \liminf_{n\to\infty}\widetilde U\big(x_n(t)\big).
\]
Since \(\widetilde U\ge 0\), Fatou’s lemma yields
\[
V(x)=\int_0^{2\pi}\widetilde U(x(t))\,dt\ \le\ \int_0^{2\pi}\liminf_{n}\widetilde U(x_n(t))\,dt
\ \le\ \liminf_{n}\int_0^{2\pi}\widetilde U(x_n(t))\,dt=\liminf_{n}V(x_n).
\]
As \(r\mapsto r^q\) is increasing and continuous for \(q>0\),
\(
V(x)^q\le \liminf_{n}V(x_n)^q.
\)
Hence \(f=\mathcal F|_{\mathcal M}=V^q\) is l.s.c.\ on \(\mathcal M\) with values in \([0,+\infty]\).
\end{proof}

Thus the pair $(\mathcal M,f)$ satisfies all assumptions of Ekeland’s variational principle.

\section{Ekeland’s principle: statement and application to $\mathcal F$}
\label{sec:ekeland}
We will use the following form of Ekeland’s variational principle.

\begin{theorem}[Ekeland’s principle]\label{thm:ekeland}
Let $(X,d)$ be a \emph{complete} metric space and 
$f:X\to(-\infty,+\infty]$ a lower semicontinuous function,
not identically $+\infty$, and bounded from below. 
Then, for any $\varepsilon,\rho>0$ there exists 
$x_{\varepsilon,\rho}\in X$ such that:
\begin{enumerate}[label=(\roman*),leftmargin=1.6em]
  \item $f(x_{\varepsilon,\rho})\le \inf_X f+\varepsilon$;
  \item $f(y)\ge f(x_{\varepsilon,\rho})-\tfrac{\varepsilon}{\rho}\,d(y,x_{\varepsilon,\rho})$ 
        for every $y\in X$.
\end{enumerate}

If, in addition, $X$ is a Hilbert space and $f$ is Fréchet–differentiable, 
there exists a sequence $(x_\varepsilon)\subset X$ such that 
\[
\|\nabla f(x_\varepsilon)\|\le \varepsilon,
\]
that is, the principle yields a \emph{Palais–Smale type} sequence 
whose values of $f$ approach the infimum.

Likewise, if $f$ is restricted to a differentiable submanifold 
$\mathcal M\subset X$ (for instance, the level set $\{K=1\}$), 
the same conclusion holds for the tangential gradient $\nabla_{\mathcal M}f$.
\end{theorem}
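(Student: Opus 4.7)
The plan is to prove the three parts of Theorem~\ref{thm:ekeland} in cascade: first the metric core, then the specialisation to a Hilbert space with Fréchet--differentiable $f$, and finally the submanifold refinement.

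For the metric statement I would work with the partial order on $X$ given by
\[
y\preceq x \;\Longleftrightarrow\; f(y)+\frac{\varepsilon}{\rho}\,d(y,x)\le f(x),
\]
which is reflexive, antisymmetric (because $d\ge 0$), and transitive (by the triangle inequality). I would fix $x_0\in X$ with $f(x_0)\le \inf_X f+\varepsilon$ and build inductively a $\preceq$--decreasing sequence $(x_n)$ as follows: given $x_n$, set $S_n:=\{y\in X:\ y\preceq x_n\}$ and pick $x_{n+1}\in S_n$ with $f(x_{n+1})\le \inf_{S_n} f+2^{-n-1}$. From $x_{n+1}\preceq x_n$ one reads $(\varepsilon/\rho)\,d(x_{n+1},x_n)\le f(x_n)-f(x_{n+1})$, which telescopes to $\sum_n d(x_{n+1},x_n)\le (\rho/\varepsilon)(f(x_0)-\inf_X f)<\infty$, so $(x_n)$ is Cauchy and, by completeness of $X$, converges to some $x_{\varepsilon,\rho}$. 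The lower semicontinuity of $f$ and continuity of $d$ let one pass to the limit in $x_m\preceq x_n$ and conclude $x_{\varepsilon,\rho}\preceq x_n$ for every $n$, which in particular yields (i). For (ii), if some $y\ne x_{\varepsilon,\rho}$ satisfied $y\prec x_{\varepsilon,\rho}$, then $y\in S_n$ for all $n$; the sandwich $f(x_{n+1})\le \inf_{S_n} f\le f(y)$ combined with $f(x_{n+1})\to f(x_{\varepsilon,\rho})$ would force $f(y)\ge f(x_{\varepsilon,\rho})$, and the penalty inequality then collapses to $d(y,x_{\varepsilon,\rho})=0$, a contradiction.

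For the Hilbert/Fréchet--differentiable case I would apply the metric statement with parameters $(\varepsilon,1)$ to obtain $x_\varepsilon$ satisfying $f(y)\ge f(x_\varepsilon)-\varepsilon\,\norm{y-x_\varepsilon}$ for all $y\in X$. Taking $y=x_\varepsilon+th$ with $\norm{h}=1$ and $t>0$, dividing by $t$ and letting $t\to 0^{+}$ gives $\la\nabla f(x_\varepsilon),h\ra\ge -\varepsilon$; optimising over the unit sphere (at $h=-\nabla f(x_\varepsilon)/\norm{\nabla f(x_\varepsilon)}$ whenever the gradient is nonzero) yields $\norm{\nabla f(x_\varepsilon)}\le \varepsilon$. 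Letting $\varepsilon=1/n$ then produces the claimed Palais--Smale sequence, whose values approach $\inf_X f$ by (i). For the submanifold refinement, since $\mathcal M$ is closed in $X$ it is complete in the induced metric, so the metric statement applies directly on $(\mathcal M,\norm{\cdot})$. To convert the resulting $\varepsilon$--penalty into a bound on the tangential gradient I would use a local $C^1$ retraction $r:\mathcal U\to\mathcal M$ from a neighbourhood of $x_\varepsilon$ in $X$ (nearest--point projection, well defined near a regular point thanks to the constraint qualification of Remark~\ref{rem:cualificacion}): for unit $h\in T_{x_\varepsilon}\mathcal M$ the curve $y(t):=r(x_\varepsilon+th)=x_\varepsilon+th+o(t)$ lies in $\mathcal M$, and the one-sided expansion above gives $\la\nabla_{\mathcal M}f(x_\varepsilon),h\ra\ge -\varepsilon$, hence $\norm{\nabla_{\mathcal M}f(x_\varepsilon)}\le \varepsilon$.

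The hardest step will be the metric core: one must guarantee simultaneously that the inductive sequence is Cauchy \emph{and} that its limit preserves the $\preceq$--minimality. Both facts rest on the telescoping estimate, which itself uses completeness of $X$ together with boundedness from below of $f$; without either hypothesis the argument collapses. The Hilbert and submanifold refinements then reduce to routine first--order expansions, the only genuinely non-trivial ingredient being the local retraction, which in our setting of $\mathcal M=\{K=1\}$ is available from the regularity already verified in Remark~\ref{rem:cualificacion}.
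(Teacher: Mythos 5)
The paper itself does not prove Theorem~\ref{thm:ekeland}: it is quoted as a known result, with a pointer to Ekeland's 1979 survey \cite{Ekeland1979}, and only its consequences are used (in Proposition~\ref{prop:existencia-ekeland}). So there is no internal proof to compare with; what you have written is essentially Ekeland's classical argument (the partial order $y\preceq x \iff f(y)+\tfrac{\varepsilon}{\rho}d(y,x)\le f(x)$, the inductive almost-minimizing selection in the sections $S_n$, the telescoping estimate giving a Cauchy sequence, and passage to the limit using completeness and lower semicontinuity), followed by the standard one-sided first-order expansion to get the gradient bound in the Hilbert case. This is correct in substance and is exactly the route of the cited reference. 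Two small repairs: in your verification of (ii) the sandwich should read $f(x_{n+1})\le \inf_{S_n}f+2^{-n-1}\le f(y)+2^{-n-1}$ (you dropped the $2^{-n-1}$), and the claim $f(x_{n+1})\to f(x_{\varepsilon,\rho})$ needs the observation that $x_{\varepsilon,\rho}\preceq x_{n+1}$ and $x_{\varepsilon,\rho}\in S_n$; alternatively, you only need $f(x_{\varepsilon,\rho})\le f(x_{n+1})\le f(y)+2^{-n-1}$, which already yields $f(x_{\varepsilon,\rho})\le f(y)$ and the contradiction.

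The one genuinely unproved step is in the submanifold refinement: the nearest-point projection onto a merely $C^1$ submanifold of a Hilbert space need not be single-valued, let alone a $C^1$ retraction (that typically requires $C^{1,1}$ or $C^2$ regularity of $\mathcal M$), so the curve $y(t)=r(x_\varepsilon+th)$ is not justified as stated. The fix is standard: since the constraint is $K=1$ with $DK(x_\varepsilon)\neq 0$ (Remark~\ref{rem:cualificacion}), the implicit function theorem provides, for each unit $h\in T_{x_\varepsilon}\mathcal M$, a $C^1$ curve $y(t)\in\mathcal M$ with $y(0)=x_\varepsilon$ and $y(t)=x_\varepsilon+th+o(t)$; your expansion $f(y(t))\ge f(x_\varepsilon)-\varepsilon\norm{y(t)-x_\varepsilon}$ then gives $\la\nabla f(x_\varepsilon),h\ra\ge-\varepsilon$ for all such $h$, hence $\norm{\nabla_{\mathcal M}f(x_\varepsilon)}\le\varepsilon$, since $\nabla_{\mathcal M}f$ is the orthogonal projection of $\nabla f$ onto $T_{x_\varepsilon}\mathcal M$. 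In the paper's application this is harmless anyway, because $K$ is a smooth quadratic form on $H^1$, so $\mathcal M=\{K=1\}$ is a smooth hypersurface; but as a proof of the theorem for a general ``differentiable submanifold'' you should replace the retraction by the chart argument. One should also note that the one-sided expansions use $f(x_\varepsilon)<\infty$, which is guaranteed by conclusion (i).
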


(See Ekeland \cite{Ekeland1979} for the full statement and its differentiable formulation in Hilbert spaces.)

\begin{proposition}[Existence of a minimizer via Ekeland]
\label{prop:existencia-ekeland}
In the complete metric space $(\mathcal M,d_{H^1})$, there exists $x_\ast\in\mathcal M$ such that $f(x_\ast)=\inf_{\mathcal M} f$.
Moreover, the differentiable form of Ekeland’s principle applied to $f|_{\mathcal M}$
provides a Palais–Smale type sequence in $\mathcal M$ which (after extraction) converges in $H^1$ and uniformly to $x_\ast$.
\end{proposition}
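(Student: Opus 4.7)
The plan is to apply the differentiable form of Ekeland's principle (Theorem~\ref{thm:ekeland}) directly to $f=V^q$ on the complete metric space $(\mathcal M, d_{H^1})$. The hypotheses are already at hand: completeness and boundedness of $\mathcal M$ from Lemma~\ref{lem:completez-M}, lower semicontinuity and nonnegativity of $f$ from Lemma~\ref{lem:lsc-Vq}, and finiteness of $\inf_{\mathcal M}f$ from the explicit test curve of Remark~\ref{rem:no-vaciedad} rescaled so that $K=1$ (which remains collision-free). Taking $\varepsilon_n\downarrow 0$ produces a sequence $(x_n)\subset\mathcal M$ with $f(x_n)\to\inf_{\mathcal M}f$ and tangential gradient decay $\|\nabla_{\mathcal M}f(x_n)\|\to 0$, the Palais–Smale type sequence asserted in the statement.

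Next I would extract a convergent subsequence. The uniform bound $\|x_n\|_{H^1}\le 2$ from Lemma~\ref{lem:completez-M} plus reflexivity of $H^1$ give $x_n\rightharpoonup x_\ast$ weakly in $H^1$, while the compact embedding $H^1(\T)\hookrightarrow C^0(\T)$ upgrades this to uniform convergence along a subsequence. Since the defining symmetries $(A_x)$ and $(Y^\star)$ are pointwise identities preserved under uniform limits, $x_\ast\in\mathcal H_{\mathrm{eight}}$.

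The main obstacle is verifying that $x_\ast$ actually lies on $\mathcal M$, that is, $K(x_\ast)=1$. Weak lower semicontinuity of $K$ only yields $K(x_\ast)\le\liminf K(x_n)=1$; strict inequality must be ruled out. Here the scale invariance of $\mathcal F$ is decisive: if $K(x_\ast)<1$ and $0<V(x_\ast)<\infty$, the rescaled curve $\tilde x_\ast:=K(x_\ast)^{-1/2}x_\ast$ satisfies $\tilde x_\ast\in\mathcal H_{\mathrm{eight}}$, $K(\tilde x_\ast)=1$, and $V(\tilde x_\ast)=K(x_\ast)^{\alpha/2}V(x_\ast)<V(x_\ast)$, giving $f(\tilde x_\ast)<V(x_\ast)^q\le\liminf f(x_n)=\inf_{\mathcal M}f$, a contradiction. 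Positivity of $V(x_\ast)$ is automatic since $U>0$ on collision-free configurations, while $V(x_\ast)=+\infty$ is excluded because Lemma~\ref{lem:lsc-Vq} would then force $\liminf V(x_n)=+\infty$, incompatible with $f(x_n)\to\inf_{\mathcal M} f<\infty$; thus no collision can appear in the limit.

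Once $K(x_\ast)=1$ is established, $x_\ast\in\mathcal M$ and the l.s.c.\ of $f$ gives $f(x_\ast)\le\liminf f(x_n)=\inf_{\mathcal M}f$, so $x_\ast$ is a minimizer. Strong $H^1$ convergence then follows at no extra cost: the identities $\|\dot x_n\|_{L^2}^2=2/m=\|\dot x_\ast\|_{L^2}^2$ hold for all $n$, and weak convergence together with convergence of norms in the Hilbert space $H^1$ implies strong convergence, which in turn sharpens the uniform convergence already obtained. This closes the argument.
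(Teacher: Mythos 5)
Your proposal is correct and follows the same overall Ekeland route as the paper (verify completeness, l.s.c.\ and boundedness below; apply Theorem~\ref{thm:ekeland} with $\varepsilon_n\downarrow 0$ to get a quasi-minimizing sequence with $\|\nabla_{\mathcal M}f(x_n)\|\to 0$; extract a weak-$H^1$ and uniform limit; conclude by lower semicontinuity). Where you genuinely diverge is in the two steps the paper treats most lightly, and your treatment is the more careful one. First, the paper asserts $K(x_\ast)=1$ ``by continuity of $K$,'' but $K$ is only weakly lower semicontinuous along $x_n\rightharpoonup x_\ast$, so a priori one only gets $K(x_\ast)\le 1$; your homothety argument (if $0<K(x_\ast)<1$, rescale by $K(x_\ast)^{-1/2}$ to stay in $\mathcal H_{\mathrm{eight}}$ with $K=1$ and strictly smaller $V$, contradicting the infimum) closes this exactly in the spirit of the paper's scale invariance, and your exclusion of $V(x_\ast)=+\infty$ via Lemma~\ref{lem:lsc-Vq} is the right way to rule out collisions in the limit. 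One small loose end: the rescaling needs $K(x_\ast)>0$; you should add the one-line observation that $K(x_\ast)=0$ forces a constant loop, which by $(A_x)$--$(Y^\star)$ must be the origin, hence total collision and $V=+\infty$, already excluded. Second, the proposition claims convergence \emph{in} $H^1$, which the paper's proof only establishes weakly; your Radon--Riesz argument (constant $\|\dot x_n\|_{L^2}^2=2/m=\|\dot x_\ast\|_{L^2}^2$ plus strong $L^2$ convergence from the compact embedding, hence norm convergence, hence strong $H^1$ convergence) supplies the missing justification. You do not reproduce the paper's explicit tangential-gradient/Lagrange-multiplier decomposition in Step~3, but the Palais--Smale property you state is all the proposition requires, so this is a presentational rather than a substantive difference.
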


\begin{proof}
\emph{Step 1: Checking Ekeland’s hypotheses.}
By Lemma~\ref{lem:completez-M}, $(\mathcal M,d_{H^1})$ is complete.
By Lemma~\ref{lem:lsc-Vq}, $f=V^q$ is l.s.c.\ on $\mathcal M$ and $f\ge 0$, hence it is bounded from below
and is not identically $+\infty$ (there are smooth curves in $\mathcal H_{\mathrm{eight}}$ with $V<\infty$).

\emph{Step 2: Applying Ekeland and constructing a quasi–minimizing family.}
For $\varepsilon_n=1/n$ and $\rho=1$, Theorem~\ref{thm:ekeland} yields $x_n\in\mathcal M$ such that
\[
f(x_n)\le \inf_{\mathcal M} f+\tfrac{1}{n},
\qquad
f(y)\ge f(x_n)-\tfrac{1}{n}\,d_{H^1}(y,x_n)\quad\forall\,y\in\mathcal M.
\]
In particular $f(x_n)<\infty$, hence, using the extension of $U$ by $+\infty$ at collisions,
$x_n\in\Omega$, where $V$ (and thus $f$) is $C^1$.

\emph{Step 3: Palais–Smale type condition on the submanifold $\mathcal M$.}
Since \( \mathcal M=\{K=1\}\cap\mathcal H_{\mathrm{eight}} \) is a \(C^1\) submanifold of \(H^1\)
(the qualification \(DK\neq 0\) on \( \mathcal M \) is recorded in Remark~\ref{rem:cualificacion}),
we may apply the \emph{differentiable} form of Ekeland to the restricted functional \(f|_{\mathcal M}\).
We obtain a sequence \(\{x_n\}\subset\mathcal M\) with
\[
\|\nabla_{\mathcal M} f(x_n)\|\ \le\ \tfrac{1}{n}.
\]
Equivalently, there exists a Lagrange multiplier \(\mu_n\in\R\) such that
\[
\nabla f(x_n)+\mu_n\,\nabla K(x_n)\ \longrightarrow\ 0
\quad\text{in }(H^1)^\ast,
\]
namely, a Palais–Smale type condition for the restriction to \( \mathcal M \).

Since $\mathcal M=\{K=1\}$ is a $C^1$ codimension--one submanifold of the Hilbert space
$X=H^1(\T;(\R^2)^N)$, its tangent space at $x\in\mathcal M$ is
\[
T_x\mathcal M
=\{\eta\in X:\ DK(x)[\eta]=0\}.
\]
By the Riesz representation in $X$, the differential $DK(x)$ can be written as
\[
DK(x)[\eta]=\langle \nabla K(x),\eta\rangle_{H^1},
\]
for a unique vector $\nabla K(x)\in X$, so that
\[
T_x\mathcal M
=\{\eta\in X:\ \langle \nabla K(x),\eta\rangle_{H^1}=0\}
=\big(\operatorname{span}\{\nabla K(x)\}\big)^\perp.
\]
Thus we have the orthogonal decomposition
\[
X = T_x\mathcal M \oplus \operatorname{span}\{\nabla K(x)\}.
\]

For each $x\in\mathcal M$ and each $v\in X$, there exists a unique scalar $\mu\in\R$ such that
\[
v = v_T + \mu\,\nabla K(x),
\qquad v_T\in T_x\mathcal M.
\]
Applied to $v=\nabla f(x)$, this gives the decomposition
\[
\nabla f(x)
=\nabla_{\mathcal M} f(x)+\mu(x)\,\nabla K(x),
\]
where $\nabla_{\mathcal M} f(x)\in T_x\mathcal M$ is the tangential gradient of $f$ along $\mathcal M$.

In particular, for the sequence $\{x_n\}\subset\mathcal M$ given by Ekeland’s principle we may write
\[
\nabla f(x_n)+\mu_n\,\nabla K(x_n)=\nabla_{\mathcal M} f(x_n).
\]
Since Ekeland yields $\|\nabla_{\mathcal M} f(x_n)\|\le 1/n$, we obtain
\[
\|\nabla f(x_n)+\mu_n\,\nabla K(x_n)\|
=\|\nabla_{\mathcal M} f(x_n)\|\longrightarrow 0,
\]
which is precisely a Palais--Smale type condition for the restriction of $f$ to $\mathcal M$.

\emph{Step 4: Compactness and passage to the limit.}
Since \(K(x_n)=1\), Proposition~\ref{prop:poincare-vertical} provides a uniform bound on \( \|x_n\|_{H^1} \).
Extracting a subsequence, there exists \(x_\ast\) with
\[
x_n \rightharpoonup x_\ast \text{ in } H^1(\T),
\qquad
x_n \to x_\ast \text{ uniformly on } \T.
\]
The l.s.c.\ from Lemma~\ref{lem:lsc-Vq} gives
\(
f(x_\ast)\le \liminf_{n\to\infty} f(x_n)=\inf_{\mathcal M} f,
\)
hence \(x_\ast\) minimizes \(f\) on \(\mathcal M\).
Moreover, by continuity of \(K\), \(K(x_\ast)=1\), and uniform convergence preserves the symmetries, so \(x_\ast\in\mathcal H_{\mathrm{eight}}\).
Since \(f(x_\ast)<\infty\), the minimum cannot be at collision (the $+\infty$ extension would forbid it),
thus \(x_\ast\in\Omega\).

We conclude that there exists a minimizer \(x_\ast\) of \(f\) on \(\mathcal M\), and that Ekeland’s sequence
is of Palais–Smale type and (after extraction) converges to \(x_\ast\) in \(H^1\) and uniformly.
This \(x_\ast\) is the natural candidate to be a critical point of \(\mathcal F\) in \(\mathcal H_{\mathrm{eight}}\);
in the next sections we analyze its regularity and verify that, after reparametrization,
it solves Newton’s equation with no collisions.
\end{proof}

% ============================================================
\section{Collision avoidance via Marchal’s criterion}
\label{subsec:ausencia-colisiones}

We use Marchal’s classical criterion to exclude collisions for action minimizers within symmetric classes that are stable under local variations.

\begin{lemma}[Marchal: no interior collisions]\label{lem:Marchal}
Let $U$ be a homogeneous potential of degree $-\alpha$ with $0<\alpha<2$ (including the Newtonian case $\alpha=1$).
If $x$ minimizes the Lagrangian action $\mathcal A=K+V$ in a class of curves stable under local variations preserving the global symmetries, then $x$ has no collisions in the interior of the period.
\emph{See} \cite{Marchal2002}.
\end{lemma}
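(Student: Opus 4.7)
My plan is to argue by contradiction, following the \emph{averaged variation} technique introduced by Marchal. Suppose $x$ is a minimizer of $\mathcal A=K+V$ in the symmetric class and that a collision among some subset $S\subset\{1,\dots,N\}$ of bodies occurs at an interior time $t_0\in(0,2\pi)$. I will exhibit an admissible family of perturbations $\{x^{\xi}\}_{\xi\in\R^2}$ whose averaged action over a small disk $B_\epsilon\subset\R^2$ is strictly less than $\mathcal A(x)$; some individual $\xi$ in that disk then realizes $\mathcal A(x^\xi)<\mathcal A(x)$, contradicting minimality.

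To construct the perturbation I first localize in time: pick $\delta>0$ small enough that $[t_0-\delta,t_0+\delta]\subset(0,2\pi)$ contains no other collision, together with a smooth bump $\varphi$ supported in that interval with $\varphi(t_0)=1$. For one colliding body $i_0\in S$, I set
\[
x_{i_0}^\xi(t)=x_{i_0}(t)+\varphi(t)\,\xi,\qquad x_i^\xi(t)=x_i(t)\ \text{for}\ i\neq i_0.
\]
To keep $x^\xi\in\mathcal H_{\mathrm{eight}}$, I replicate the same perturbation at every image of $t_0$ under the group generated by $(A_x)$ and $(Y^\star)$ (with the appropriate sign or reflection of $\xi$), and I shrink $\delta$ so that these supports remain disjoint and their contributions to $K$ and $V$ decouple.

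The calculation then splits cleanly. Because $\int_{B_\epsilon}\xi\,d\xi=0$ kills the cross term, the kinetic average increases by a controlled quantity of order $\epsilon^2$, with a constant depending only on $\varphi$ and $m$. The essential input on the potential side is the classical Marchal inequality for homogeneous kernels $|y|^{-\alpha}$ in the range $0<\alpha<2$: averaging $|y+\xi|^{-\alpha}$ over $\xi\in B_\epsilon$ regularizes the singularity and produces a strict pointwise gain whenever $|y|\lesssim\epsilon$. Integrated in time over the window where $|x_{i_0}-x_j|\lesssim\epsilon$, this yields an averaged potential saving that, thanks to the integrability of $|y|^{-\alpha}$ at the origin in $\R^2$ (which is precisely what fails at $\alpha=2$), dominates the $\epsilon^{2}$ kinetic cost as $\epsilon\downarrow 0$. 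Consequently the averaged action beats $\mathcal A(x)$ for $\epsilon$ small, and the desired contradiction follows.

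I expect the main obstacle to be precisely this potential estimate: one has to quantify the spatial averaging at the singularity while knowing only that $x\in H^1$, typically through Sundman–type estimates that relate $|x_{i_0}(t)-x_j(t)|$ to the local contribution of the action near $t_0$. A secondary point is the case in which $t_0$ is fixed by a non-trivial element of the symmetry group: there the admissible displacements $\xi$ form a linear subspace of $\R^2$, and one must check that this subspace is at least one–dimensional (which holds here, since neither $(A_x)$ nor $(Y^\star)$ fixes the plane pointwise) so that averaging over its unit disk still exhibits a negative direction. These are exactly the technical points carried out in~\cite{Marchal2002} and reorganized in~\cite{FerrarioTerracini2004}.
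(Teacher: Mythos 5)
The paper does not actually prove this lemma: it is stated as Marchal's theorem and deferred wholesale to \cite{Marchal2002} (with \cite{FerrarioTerracini2004} offered later as an equivariant alternative), so your attempt must be judged against that literature rather than against an argument in the text. Your architecture is indeed Marchal's averaged-variation scheme — localize near the collision time $t_0$, displace a colliding body by $\varphi(t)\xi$, average the action over $\xi$, and conclude that some individual $\xi$ beats the minimizer — and the kinetic bookkeeping (the vanishing first moment $\int_{B_\epsilon}\xi\,d\xi=0$ killing the cross term, cost $O(\epsilon^2)$) is correct.

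The genuine gap is the potential estimate, which is the entire content of the theorem and which you assert in a form that fails for planar averaging. In $\R^2$ one has $\Delta|y|^{-\alpha}=\alpha^2|y|^{-\alpha-2}>0$, so $|y|^{-\alpha}$ is \emph{subharmonic} away from the origin: averaging $|y+\xi|^{-\alpha}$ over a disk or circle $B_\epsilon\subset\R^2$ \emph{increases} the potential at every time at which the mutual distance exceeds (roughly) $\epsilon$. Hence, besides the near-collision gain, there is a competing loss on the rest of the support of $\varphi$, and it is not of lower order: with the Sundman asymptotics $r(t)\sim c\,|t-t_0|^{2/(2+\alpha)}$, the gain (times with $r\lesssim\epsilon$) and the loss (roughly $\epsilon^2\int r^{-\alpha-2}\,dt$ over times with $r\gtrsim\epsilon$) both scale like $\epsilon^{(2-\alpha)/2}$, while the kinetic cost $O(\epsilon^2)$ is negligible in comparison. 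So the real fight is gain versus loss at the same order, and the outcome depends on the explicit constants and on the geometry and dimension of the averaging set — this is precisely why Marchal averages over a sphere in $\R^3$ (for $\alpha=1$ the spherical average equals the unperturbed value whenever $r\ge\epsilon$, so there is no loss at all), and why Ferrario–Terracini must compute circular averages explicitly and invoke the rotating-circle property to treat constrained symmetric (in particular planar) classes, where out-of-plane displacements are inadmissible and the isotropy of $t_0$ restricts the allowed $\xi$, as you note. A secondary bookkeeping point: in the choreographic class one can only vary the single loop $\gamma$, so ``perturb one body and freeze the others'' must be rephrased as a localized variation of $\gamma$, which also perturbs the other bodies at shifted, collision-free times (harmless, but it must be accounted for). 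As written, the claim that a ``strict pointwise gain whenever $|y|\lesssim\epsilon$'' dominates an $\epsilon^2$ kinetic cost skips the dominant negative term, so the proposal does not yet prove the lemma; one must either carry out the quantitative averaging comparison as in \cite{FerrarioTerracini2004}, or do as the paper does and cite the result.
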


In our setting, the minimizer provided by Ekeland’s principle for $f=V^q$ over
\[
\mathcal M=\{x\in\mathcal H_{\mathrm{eight}}:\ K(x)=1\},\qquad q=\tfrac{2}{\alpha+2},
\]
yields, after optimizing the scale, a minimizer of the action $\mathcal A$ in its symmetric class $(A_x)$–$(Y^\star)$,
which is stable under local variations (on subintervals respecting the symmetries). 
Therefore Lemma~\ref{lem:Marchal} applies directly: the minimizer is collision–free on $(0,2\pi)$.

\begin{remark}[Compatibility with the scale–invariant functional]\label{rem:marchal-f}
Recall that
\[
\mathcal F(x)=\min_{\lambda>0}\Phi_\alpha(\lambda;x),
\qquad
\Phi_\alpha(\lambda;x)=\lambda^2K(x)+\lambda^{-\alpha}V(x).
\]
For each $x$ let $\lambda^*(x)>0$ be the (unique) minimizer of $\Phi_\alpha(\cdot;x)$, and set
\[
y := x_{\lambda^*(x)}.
\]
Then, by the scaling laws for $K$ and $V$,
\[
\mathcal F(x)
=\min_{\lambda>0}\Phi_\alpha(\lambda;x)
=\Phi_\alpha\big(\lambda^*(x);x\big)
=\mathcal A(y).
\]

Suppose now that $y$ has a collision. 
By Marchal’s construction there exists a local variation $y^\sharp$ of $y$
(preserving the endpoints and global symmetries) such that
\[
\mathcal A(y^\sharp)<\mathcal A(y)=\Phi_\alpha\big(\lambda^*(x);x\big)=\mathcal F(x).
\]
Define $x^\sharp$ by $y^\sharp=(x^\sharp)_{\lambda^*(x)}$. 
We claim that $\mathcal F(x^\sharp)<\mathcal F(x)$, and we justify each step in the chain
\[
\mathcal F(x^\sharp)
=\min_{\lambda>0}\Phi_\alpha(\lambda;x^\sharp)
\le \Phi_\alpha\big(\lambda^*(x);x^\sharp\big)
=\mathcal A(y^\sharp)
< \mathcal A(y)
=\Phi_\alpha\big(\lambda^*(x);x\big)
=\mathcal F(x).
\]

\emph{(i) Definition of $\mathcal F(x^\sharp)$.}  By definition,
\[
\mathcal F(x^\sharp)=\min_{\lambda>0}\Phi_\alpha(\lambda;x^\sharp).
\]

\emph{(ii) Evaluation at a fixed scale.}  
For any fixed $\lambda>0$ one has
\[
\min_{\lambda>0}\Phi_\alpha(\lambda;x^\sharp)
\le \Phi_\alpha(\lambda;x^\sharp).
\]
In particular, taking $\lambda=\lambda^*(x)$ gives
\[
\min_{\lambda>0}\Phi_\alpha(\lambda;x^\sharp)
\le \Phi_\alpha\big(\lambda^*(x);x^\sharp\big).
\]
Here equality does not hold in general, since $\lambda^*(x)$ minimizes
$\Phi_\alpha(\cdot;x)$ but need not minimize $\Phi_\alpha(\cdot;x^\sharp)$.

\emph{(iii) Identification with the action of $y^\sharp$.}  
By definition of $y^\sharp=(x^\sharp)_{\lambda^*(x)}$ and of $\Phi_\alpha$,
\[
\Phi_\alpha\big(\lambda^*(x);x^\sharp\big)
= \mathcal A(y^\sharp).
\]

\emph{(iv) Marchal’s strict inequality.}  
Marchal’s deformation gives
\[
\mathcal A(y^\sharp)<\mathcal A(y).
\]

\emph{(v) Identification with $\mathcal F(x)$.}  
Since $\lambda^*(x)$ minimizes $\Phi_\alpha(\cdot;x)$, we have
\[
\mathcal A(y)
= \Phi_\alpha\big(\lambda^*(x);x\big)
= \mathcal F(x).
\]

Combining (i)–(v) yields $\mathcal F(x^\sharp)<\mathcal F(x)$.
Hence any curve with an interior collision can be locally modified to produce
a strictly smaller value of $\mathcal F$, so collision exclusion also holds
for minimizers of the scale–invariant functional $\mathcal F$.
\end{remark}

Because $\Phi_\alpha(\lambda;x)$ is convex in $\lambda$, any local decrease in $\mathcal A$ implies a strict decrease in $\mathcal F$ at the corresponding scale.

{\bf Binary collision versus regular path.}
For illustration, consider two equal masses undergoing a symmetric binary collision near $t=0$, with mutual distance 
$r(t)\sim c\,t^{2/(2+\alpha)}$ for some constant $c>0$. 
Then $|\dot r(t)|^2$ and $r(t)^{-\alpha}$ have the same local behaviour 
$t^{-2\alpha/(2+\alpha)}$, so that their contributions to the action over $(0,\varepsilon)$
scale like
\[
K_\varepsilon \sim \int_0^\varepsilon t^{-2\alpha/(2+\alpha)}\,dt,
\qquad
V_\varepsilon \sim \int_0^\varepsilon t^{-2\alpha/(2+\alpha)}\,dt
\sim C\,\varepsilon^{(2-\alpha)/(2+\alpha)},
\]
which remains finite for $0<\alpha<2$. 
Thus collision arcs have finite action, and a direct divergence argument cannot be used to discard them.
Instead, Marchal's construction shows that one can perform a local deformation near the collision ---preserving the endpoints and the global symmetries--- which strictly lowers the action $\mathcal A$. 
By Remark~\ref{rem:marchal-f}, this implies a strict decrease of the scale–invariant functional $\mathcal F$ as well, so collision paths cannot be minimizers of $\mathcal F$.

%{\bf Binary-collision versus regular path.} For illustration, consider two equal masses undergoing a symmetric binary collision near $t=0$, with mutual distance 
%$r(t)\sim c\,t^{2/(2+\alpha)}$ for some constant $c>0$. 
%Then the kinetic and potential contributions satisfy 
%$|\dot r|^2\sim t^{-2\alpha/(2+\alpha)}$ and $r^{-\alpha}\sim t^{-2\alpha/(2+\alpha)}$, 
%so that both 
%\[
%K\sim\int_0^\varepsilon t^{-2\alpha/(2+\alpha)}\,dt,
%\qquad 
%V\sim\int_0^\varepsilon t^{-2\alpha/(2+\alpha)}\,dt,
%\]
%diverge as $\varepsilon\to0$. 
%Consequently the scale–invariant functional 
%$\mathcal F=K^{\alpha/(\alpha+2)}V^{2/(\alpha+2)}$ also diverges for collision trajectories. 
%In contrast, any nearby deformation that removes the collision keeps $r(t)\ge\delta>0$, 
%so that $K$ and $V$ are finite and $\mathcal F$ attains a finite (and therefore strictly smaller) value. 
%This heuristic computation illustrates the mechanism behind Marchal’s criterion: 
%collision paths cannot minimize $\mathcal F$ since any local regular perturbation yields a smaller action value.

\begin{remark}[$G$–equivariant alternative]
Collisionlessness also follows from the general framework of Ferrario–Terracini \cite{FerrarioTerracini2004} for $G$–equivariant minimizers with the \emph{rotating circle property}. 
The class $(A_x)$–$(Y^\star)$ fits within that scheme (details omitted).
\end{remark}

Lemma~\ref{lem:Marchal} rules out interior collisions. 
In the periodic case, the endpoints $0$ and $2\pi$ are identified, and the reinforced Poincaré coercivity together with the symmetries prevents approach to total collision in the minimizing class while keeping $K=1$. 
Consequently, the variational principle suppresses configurations with unbounded action, energetically selecting smoother choreographies.

\section{Regularity of the critical point}
\label{sec:fig8}

\begin{lemma}[Regularity bootstrap: $x_\ast\in C^\infty$]
\label{lem:bootstrap}
The minimizer $x_\ast$ is collision--free.  
Therefore $U$ and all its derivatives are smooth and bounded along the trajectory.  
The weak Newton equation
\[
m\,\ddot x_\ast = \nabla U(x_\ast)
\]
then implies, via a standard iterative bootstrap argument, that 
$x_\ast\in C^\infty([0,2\pi])$; moreover, if $U$ is analytic away from collisions, 
then $x_\ast$ is real--analytic in time.
\end{lemma}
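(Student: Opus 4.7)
The plan is a textbook ODE bootstrap: once one knows that $x_\ast$ stays a uniform positive distance from the collision set, the weak Newton equation~\eqref{eq:newton-standard-final} lets us read off arbitrarily high regularity by iteration. So the first step is to establish uniform separation. Marchal's criterion (Lemma~\ref{lem:Marchal}) excludes interior collisions, and the closing paragraph of Section~\ref{subsec:ausencia-colisiones}, together with the coercivity of Corollary~\ref{cor:coercividad-K}, rules out total collision at the identified endpoints of $\T$. Combined with the compact embedding $H^1(\T)\hookrightarrow C^0(\T)$ from Lemma~\ref{lem:completez-M}, this gives $x_\ast\in C^0(\T)$ with
\[
\delta\ :=\ \min_{t\in\T,\ i\neq j}\abs{x_{\ast,i}(t)-x_{\ast,j}(t)}\ >\ 0,
\]
so the trajectory lies in a compact subset of $\Omega$ on which $U$ is $C^\infty$ and every derivative $\nabla^k U$ is uniformly bounded along $x_\ast$.

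With uniform separation in hand, I would run an induction on~\eqref{eq:newton-standard-final}, which holds as a distributional identity on $\T$ (the weak Euler–Lagrange relation derived in Section~\ref{sec:apriori}, together with the affine time change $s=\sqrt{\rho}\,t$ that preserves regularity, and the periodic boundary conditions built into $\mathcal H_{\mathrm{eight}}$). Suppose $x_\ast\in C^k(\T)$; then $t\mapsto\nabla U(x_\ast(t))$ is the composition of a $C^k$ map with a map which is $C^\infty$ on a neighborhood of its image, hence $C^k$. Therefore $\ddot x_\ast\in C^k(\T)$, i.e.\ $x_\ast\in C^{k+2}(\T)$. The base case $k=0$ is the embedding $H^1\hookrightarrow C^0$, and iterating gives $x_\ast\in C^\infty(\T)$. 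For the analytic upgrade, I would rewrite Newton's equation as the first-order autonomous system $(\dot x,\dot v)=\bigl(v,\,m^{-1}\nabla U(x)\bigr)$ on $\Omega\times(\R^2)^N$: if $U$ is real-analytic off the collision set, the vector field is real-analytic there, and the classical theorem on ODEs with analytic right-hand side (Picard–Lindelöf with analytic data) promotes the $C^\infty$ solution to a real-analytic one on $\T$.

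The main (and essentially only) subtle point is the very first step of the induction: the Euler–Lagrange relation is produced as a distributional identity by the Lagrange-multiplier argument of Section~\ref{sec:ekeland}, and one must legitimately pass to a pointwise equation before the iteration can begin. This is the standard removable-singularity observation that a second distributional derivative equal to a continuous function agrees with the classical second derivative; its careful application uses periodicity and is the step I would write out in full. Everything else — the bootstrap induction and the analytic upgrade — is then routine.
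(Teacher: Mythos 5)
Your proposal is correct and follows essentially the same route as the paper: uniform separation from the collision set, boundedness/smoothness of $\nabla U$ along the trajectory, an inductive bootstrap through the (distributional) Newton equation, and an analytic-ODE argument for the final upgrade. The one point you flag as needing care — passing from the distributional identity to a classical second derivative — is exactly what the paper handles by first observing $\nabla U(x_\ast)\in L^\infty$, hence $x_\ast\in W^{2,\infty}\hookrightarrow C^{1,1}$, before starting its $C^1\to C^3\to C^5\to\cdots$ iteration (your version starts at $C^0$ and increments by two, which is equivalent).
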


\begin{proof}
After the time–rescaling $s=\sqrt{\rho}\,t$ performed in the previous section,
the minimizer satisfies Newton’s equation in the classical form
\[
m\,\ddot x_\ast = \nabla U(x_\ast),
\]
where the dot denotes differentiation with respect to the rescaled time $s$.
We keep the notation $t$ for simplicity.

\paragraph{\textbf{Step 1: Weak formulation.}}
Since $x_\ast\in H^1(\T;\R^{2N})$ minimizes the action under the symmetry constraints,
for every test function $\eta\in C^\infty_{\mathrm{per}}([0,2\pi];\R^{2N})$ we have
\begin{equation}\label{eq:weak}
m\int_0^{2\pi} \dot x_\ast(t)\cdot \dot \eta(t)\,dt
=
\int_0^{2\pi} \nabla U(x_\ast(t))\cdot \eta(t)\,dt.
\end{equation}
Integrating by parts on the left (boundary terms vanish by periodicity), we obtain
\[
-\,m\int_0^{2\pi}\ddot x_\ast(t)\cdot\eta(t)\,dt
=
\int_0^{2\pi}\nabla U(x_\ast(t))\cdot\eta(t)\,dt
\qquad\forall\,\eta.
\]
Thus, in the sense of distributions,
\[
m\,\ddot x_\ast=\nabla U(x_\ast).
\]

\paragraph{\textbf{Step 2: $L^\infty$ regularity and $C^1$ regularity.}}
Because $x_\ast$ has no collisions, its image
\[
K := x_\ast([0,2\pi])
\subset (\R^2)^N\setminus\Delta
\]
is compact and lies at positive distance from the singular set $\Delta$.
Hence
\[
\nabla U|_K \in C^\infty(K),\qquad 
\|\nabla U\|_{L^\infty(K)}<\infty.
\]
Therefore the right-hand side $\nabla U(x_\ast(t))$ belongs to $L^\infty(0,2\pi)$, so
\[
\ddot x_\ast \in L^\infty(0,2\pi),\qquad
x_\ast\in W^{2,\infty}(0,2\pi).
\]

Since in one dimension 
\[
W^{2,\infty}(0,2\pi)\hookrightarrow C^{1,1}([0,2\pi]),
\]
we obtain
\[
x_\ast\in C^1([0,2\pi])\quad\text{and}\quad \dot x_\ast \text{ is Lipschitz}.
\]

\paragraph{\textbf{Step 3: Start of the bootstrap.}}
Having $x_\ast\in C^1$ and $\nabla U\in C^\infty$ on $K$, the chain rule implies
\[
t\longmapsto \nabla U(x_\ast(t)) \in C^1([0,2\pi]).
\]
Then the ODE
\[
m\,\ddot x_\ast = \nabla U(x_\ast)
\]
shows that $\ddot x_\ast\in C^1$, hence
\[
x_\ast\in C^3([0,2\pi]).
\]

\paragraph{\textbf{Step 4: Inductive step.}}
Assume $x_\ast\in C^r$ for some $r\ge1$.
Because $U\in C^\infty$ on $K$, all derivatives $\partial^\beta\nabla U$ are bounded on $K$.
Composing with $x_\ast\in C^r$ yields
\[
\nabla U(x_\ast)\in C^r.
\]
Thus $\ddot x_\ast\in C^r$, and integrating twice gives
\[
x_\ast\in C^{r+2}.
\]

\paragraph{\textbf{Step 5: Conclusion of the bootstrap.}}
Starting from $x_\ast\in C^1$ we obtain successively
\[
C^1 \longrightarrow C^3 \longrightarrow C^5 \longrightarrow \cdots
\]
and therefore 
\[
x_\ast\in C^\infty([0,2\pi];\R^{2N}).
\]

\paragraph{\textbf{Step 6: Analyticity.}}
If $U$ is real–analytic on $(\R^2)^N\setminus\Delta$,  
then $\nabla U$ is analytic, and the ODE 
\[
m\,\ddot x_\ast = \nabla U(x_\ast)
\]
is analytic on the compact collision–free set $K$.  
By the classical Cauchy–Kowalevski theorem for analytic ODEs, any $C^2$ solution whose image stays in $K$ is analytic in $t$.  
Since we have already shown $x_\ast\in C^\infty$ and $x_\ast([0,2\pi])\subset K$, it follows that $x_\ast$ is analytic.

\end{proof}

\subsection{Symmetries $(A_x)$–$(Y^\star)$: $C^1$ gluing, crossings, and uniqueness}

Consider the symmetries
\[
(A_x)\quad \gamma(t+\pi)=(x(t),-y(t)),
\qquad
(Y^\star)\quad \gamma\!\left(\tfrac{\pi}{2}+t\right)
   =\big(-x(\tfrac{\pi}{2}-t),\,y(\tfrac{\pi}{2}-t)\big),
\]
together with the phase normalization $\gamma(0)=(0,0)$.
These conditions define the symmetric class $\mathcal H_{\mathrm{eight}}$, 
characteristic of the vertical figure–eight.

\begin{lemma}[Boundary conditions and $C^1$ gluing in $\mathcal H_{\mathrm{eight}}$]
\label{lem:pegado-eight}
If $\gamma\in\mathcal H_{\mathrm{eight}}$, then:
\begin{enumerate}[leftmargin=1.6em]
\item $x(\tfrac{\pi}{2})=0$ and $y'(\tfrac{\pi}{2})=0$ 
      (the curve meets the $y$–axis orthogonally at $t=\tfrac{\pi}{2}$);
\item starting from the fundamental segment $[0,\tfrac{\pi}{2}]$, symmetry (Y$^\star$) 
      generates $[\tfrac{\pi}{2},\pi]$ with $C^1$ matching at $t=\tfrac{\pi}{2}$, 
      and (A$_x$) replicates this block on $[\pi,2\pi]$, completing the period.
\end{enumerate}
\end{lemma}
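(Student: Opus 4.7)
The plan is to derive part (1) directly from the reflection identity $(Y^\star)$ evaluated and differentiated at $t=0$, then to use those boundary values to verify $C^1$ gluing in part (2) by a case-by-case check at the three matching times $t=\tfrac{\pi}{2}$, $t=\pi$, and $t=2\pi$. Throughout, one works with the componentwise form of $(Y^\star)$, namely $x(\tfrac{\pi}{2}+t)=-x(\tfrac{\pi}{2}-t)$ and $y(\tfrac{\pi}{2}+t)=y(\tfrac{\pi}{2}-t)$, and with $(A_x)$ in the form $x(t+\pi)=x(t)$, $y(t+\pi)=-y(t)$. The pointwise derivatives involved are legitimate because, by Lemma~\ref{lem:bootstrap}, the critical curve $\gamma$ is $C^\infty$ away from collisions.

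For part (1), setting $t=0$ in the first component identity gives $x(\tfrac{\pi}{2})=-x(\tfrac{\pi}{2})$, hence $x(\tfrac{\pi}{2})=0$. Differentiating the second identity $y(\tfrac{\pi}{2}+t)=y(\tfrac{\pi}{2}-t)$ in $t$ and evaluating at $t=0$ yields $y'(\tfrac{\pi}{2})=-y'(\tfrac{\pi}{2})$, whence $y'(\tfrac{\pi}{2})=0$. Geometrically, the velocity at $t=\tfrac{\pi}{2}$ has a purely horizontal component at a point lying on the $y$-axis, so the crossing is orthogonal.

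For part (2), the argument at each joining time follows the same pattern: continuity is obtained by evaluating the defining symmetry formula, and $C^1$ gluing is obtained by differentiating it once and invoking part (1). At $t=\tfrac{\pi}{2}$, the extension of $\gamma$ from $[0,\tfrac{\pi}{2}]$ to $[\tfrac{\pi}{2},\pi]$ is given by $(Y^\star)$; the continuity follows from $\gamma(\tfrac{\pi}{2})=(0,y(\tfrac{\pi}{2}))$ computed from both sides, and the derivative matching reduces to the identities $x'(\tfrac{\pi}{2})=x'(\tfrac{\pi}{2})$ (trivial) and $y'(\tfrac{\pi}{2})=-y'(\tfrac{\pi}{2})$, which is exactly part (1). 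At $t=\pi$, the value from the $(Y^\star)$ extension evaluated at $s=\tfrac{\pi}{2}$ gives $\gamma(\pi)=(-x(0),y(0))=(0,0)$, which agrees with $(A_x)$ evaluated at $t=0$; differentiating $(A_x)$ gives $\gamma'(\pi)=(x'(0),-y'(0))$, and the same expression arises by differentiating the $(Y^\star)$ formula at $s=\tfrac{\pi}{2}$, so the $C^1$ gluing is automatic. Finally, at $t=2\pi$, applying $(A_x)$ twice gives $\gamma(t+2\pi)=\gamma(t)$ together with the corresponding identity on $\dot\gamma$, which closes the loop and delivers $2\pi$-periodicity together with $C^1$ matching across the identified endpoints.

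The main potential obstacle is purely bookkeeping of signs in the reflection identities: one must keep straight that $(Y^\star)$ reflects $x$ oddly and $y$ evenly about $t=\tfrac{\pi}{2}$, while $(A_x)$ is the identity on $x$ and a sign flip on $y$ under $t\mapsto t+\pi$. Once those sign conventions are laid out and part (1) is established, no analytical input beyond the already available smoothness is needed; there are no limiting or compactness arguments, and no genuine analytical difficulty remains.
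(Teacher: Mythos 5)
Your proof is correct and follows essentially the same route as the paper: part (1) comes from evaluating and differentiating $(Y^\star)$ at $t=0$, and part (2) from the symmetry relations and their derivatives at the joining times. The only difference is that you spell out explicitly the matchings at $t=\pi$ and $t=2\pi$ (and justify pointwise differentiation via the regularity of the critical curve), which the paper's proof summarizes in a single sentence about $(A_x)$ pasting $[\pi,2\pi]$ without loss of regularity.
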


\begin{proof}
From (Y$^\star$) with $t=0$ we get
$\gamma(\tfrac{\pi}{2})=(-x(\tfrac{\pi}{2}),\,y(\tfrac{\pi}{2}))$, 
hence $x(\tfrac{\pi}{2})=-x(\tfrac{\pi}{2})$ and thus $x(\tfrac{\pi}{2})=0$.
Differentiating (Y$^\star$) and evaluating at $t=0$ gives
$\dot\gamma(\tfrac{\pi}{2})=(x'(\tfrac{\pi}{2}),-y'(\tfrac{\pi}{2}))$, 
so $y'(\tfrac{\pi}{2})=-y'(\tfrac{\pi}{2})$ and therefore $y'(\tfrac{\pi}{2})=0$. 
This ensures the $C^1$ gluing at $t=\tfrac{\pi}{2}$.
Symmetry (A$_x$) then pastes $[\pi,2\pi]$ with no loss of regularity, 
thanks to the parity/sign change of $x$ and $y$ under a shift by $\pi$.
\end{proof}

\begin{lemma}[Node at the origin]
\label{lem:nodo-origen-eight}
If $\gamma\in\mathcal H_{\mathrm{eight}}$ and we fix $\gamma(0)=(0,0)$, then
$\gamma(\pi)=(0,0)$ by (A$_x$). In particular, the origin is a self–intersection point
of the loop. Moreover, the velocity does not vanish at $t=0$ nor at $t=\pi$
(transverse crossings) for any nonconstant Newtonian solution obtained
by the variational procedure.
\end{lemma}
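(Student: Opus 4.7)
The first assertion is immediate: setting $t=0$ in $(A_x)$ gives $\gamma(\pi)=(x(0),-y(0))=(0,0)$, so the origin is visited at both $t=0$ and $t=\pi$, and is therefore a self-intersection of the loop. The substance of the lemma is the non-vanishing of $\dot\gamma$ at these instants, and I plan to prove it by contradiction: assume $\dot\gamma(0)=0$ and deduce $\gamma\equiv 0$, in conflict both with non-constancy and with collision-freeness (Section~\ref{subsec:ausencia-colisiones}).

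The key preliminary step is that $(A_x)$ and $(Y^\star)$ together force $\gamma$ to be \emph{odd} about $t=0$. Starting from $\gamma(-t)=\gamma(2\pi-t)$, one applies $(A_x)$ to rewrite it as $(x(\pi-t),-y(\pi-t))$, and then $(Y^\star)$ with shift $s=\pi/2-t$ to identify $x(\pi-t)=-x(t)$ and $y(\pi-t)=y(t)$. This yields $\gamma(-t)=-\gamma(t)$ and, by differentiation, $\dot\gamma(-t)=\dot\gamma(t)$. Writing $p:=\gamma(2\pi/3)$, oddness together with $2\pi$-periodicity gives $\gamma(4\pi/3)=\gamma(-2\pi/3)=-p$, so at $t=0$ the three bodies occupy the collinear equal-mass Euler configuration $(0,p,-p)$, with body~$0$ at the center of mass; likewise $\dot\gamma(4\pi/3)=\dot\gamma(-2\pi/3)=\dot\gamma(2\pi/3)=:v$.

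Under the contradiction hypothesis $\dot\gamma(0)=0$, vanishing of linear momentum in the center-of-mass frame---$\bar\gamma=0$ by Lemma~\ref{lem:media-x-cero} and the antiperiodicity of $y$ in $(A_x)$, and Newton's equations propagate this constancy---gives $0+v+v=0$, whence $v=0$. Thus at $t=0$ all three bodies are simultaneously at rest at the Euler central configuration $(0,p,-p)$. This configuration carries two reflection symmetries, about the line through $p,-p$ and about its orthogonal through the origin, each of which (possibly after relabeling bodies $1$ and $2$) leaves the initial positions and velocities invariant. Cauchy--Lipschitz uniqueness in $\Omega$ then propagates both symmetries to the whole trajectory; since body~$0$ is a fixed point of each individual reflection, its trajectory lies in the intersection of the two fixed axes, which is $\{0\}$. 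Hence $\gamma\equiv 0$, the desired contradiction. For $\dot\gamma(\pi)\neq 0$ it now suffices to observe that $(A_x)$ gives $\dot\gamma(\pi)=(\dot x(0),-\dot y(0))$, so its vanishing is equivalent to that of $\dot\gamma(0)$.

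I expect the main obstacle to be the last step, namely justifying rigorously that a planar Newtonian motion released from rest at an equal-mass Euler configuration keeps the central body fixed at the origin. The symmetry-plus-uniqueness route above seems the most elementary, reducing the claim to two independent reflections whose common set of fixed points is $\{0\}$; an alternative is to invoke the classical fact that a central configuration released from rest evolves along the homothetic collapse orbit, but that appeal requires more machinery than the direct symmetry argument.
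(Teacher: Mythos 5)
Your proof is correct, but it takes a genuinely different and considerably more detailed route than the paper's. The paper settles the transversality claim in one line: if $\gamma(0)=(0,0)$ and $\dot\gamma(0)=0$, then ``local uniqueness for the Cauchy problem would force a stationary solution''. Read literally, that is incomplete: one body at rest at the origin is not an equilibrium initial condition for the full three-body system (the other two bodies sit elsewhere with a priori nonzero velocities, and the force on body $0$ need not vanish). Your argument supplies exactly the missing input: from $(A_x)$ and $(Y^\star)$ you correctly derive that $\gamma$ is odd about $t=0$ (hence $\dot\gamma$ is even), so at $t=0$ the configuration is $(0,p,-p)$ with $p=\gamma(2\pi/3)$ and bodies $1,2$ share the same velocity $v$; since for a periodic Newtonian solution the center of mass moves uniformly, hence is constant, and equals $\bar\gamma=0$ (Lemma~\ref{lem:media-x-cero} plus the antiperiodicity of $y$), the total momentum vanishes and $v=0$, after which your two-reflection equivariance-plus-uniqueness argument pins body $0$ at the origin and forces $\gamma\equiv 0$, the desired contradiction. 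The paper's version buys brevity at the cost of the gap your reasoning fills; yours buys a complete reduction to a rest solution and a rigorous exclusion of it. Two small touch-ups: dispose separately of the degenerate case $p=\gamma(2\pi/3)=0$ (needed for your two reflection axes to be well defined), which is immediately a total collision at $t=0$ and is excluded by collision-freeness; and state the momentum step as ``total momentum is conserved and the center of mass is periodic, hence constant'', rather than as momentum vanishing ``in the center-of-mass frame'', which is tautological. Your final reduction of $t=\pi$ to $t=0$ via $\dot\gamma(\pi)=(\dot x(0),-\dot y(0))$ coincides with the paper's ``same argument by symmetry''.
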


\begin{proof}
By (A$_x$), $\gamma(\pi)=(x(0),-y(0))=(0,0)$.
If $\dot\gamma(0)=0$ with $\gamma(0)=(0,0)$, local uniqueness for the Cauchy problem
would force a stationary solution, contradicting the nontriviality
of the minimizing orbit. The same argument applies at $t=\pi$ by symmetry.
\end{proof}

Lemma~\ref{lem:nodo-origen-eight} captures an intrinsic geometric feature
of any curve in the class $\mathcal H_{\mathrm{eight}}$:
the double transverse crossing at the origin, imposed solely by the symmetries.
This will be essential in Theorem~\ref{thm:fig8-eight},
since the variational minimizer preserves these symmetries
and therefore has the shape of a vertical figure–eight.

\begin{theorem}[Vertical figure–eight in $\mathcal H_{\mathrm{eight}}$ for $0<\alpha<2$]
\label{thm:fig8-eight}
Let $U$ be a homogeneous potential of degree $-\alpha$ with $0<\alpha<2$ and three equal masses in the plane.
Then there exists a $2\pi$–periodic three–body solution of choreographic type \eqref{eq:choreo},
belonging to the class $\mathcal H_{\mathrm{eight}}$, collision–free and with a node at the origin
at times $t\equiv 0\ (\mathrm{mod}\ \pi)$.
After a suitable time reparametrization, it solves the Newton equation associated with $U$.
\end{theorem}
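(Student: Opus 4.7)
The plan is to collect the machinery developed in Sections~\ref{sec:apriori}--\ref{sec:fig8} into one coherent existence statement. First, I would invoke Proposition~\ref{prop:existencia-ekeland} to produce a minimizer $\gamma_*\in\mathcal M=\{K=1\}\cap\mathcal H_{\mathrm{eight}}$ of $f=V^q$, together with a Palais--Smale sequence converging to it in $H^1(\T)$ and uniformly. Proposition~\ref{prop:equivalencias} then identifies $\gamma_*$, after the optimal rescaling $\lambda^{*}>0$, with a minimizer of the scale-invariant functional $\mathcal F$ on the homothety-stable class $\mathcal H_{\mathrm{eight}}$, thus turning the constrained problem into a free critical-point problem for $\mathcal F$.

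Second, I would activate the Euler--Lagrange analysis from Section~\ref{sec:apriori}. Equations \eqref{eq:EL-weak-final}--\eqref{eq:newton-lambda-final} show that $\gamma_*$ satisfies a reparametrized Newton equation on the choreographic ansatz (so that any variation $\eta\in T_{\gamma_*}\mathcal H_{\mathrm{eight}}$ gives $D\mathcal F(\gamma_*)[\eta]=0$); the virial identity \eqref{eq:virial-final} fixes the multiplier $\rho=2K/(\alpha V)>0$; and the time change $s=\sqrt{\rho}\,t$ recasts the equation in the classical Newtonian form \eqref{eq:newton-standard-final}. Interior collisions are ruled out by Marchal's criterion (Lemma~\ref{lem:Marchal}) via Remark~\ref{rem:marchal-f}, which transfers collision avoidance from $\mathcal A$ to $\mathcal F$. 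The regularity bootstrap of Lemma~\ref{lem:bootstrap} then upgrades $\gamma_*$ from $H^1$ to $C^\infty$, and to real-analytic when $U$ is.

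Third, the geometric content of the statement follows from Section~\ref{sec:fig8}: Lemma~\ref{lem:pegado-eight} provides the $C^1$ gluing at $t=\pi/2$ and the orthogonal crossing of the $y$-axis, while Lemma~\ref{lem:nodo-origen-eight} produces the node $\gamma_*(0)=\gamma_*(\pi)=(0,0)$ with nonzero velocity, so the trace of $\gamma_*$ is a vertical figure-eight with two transverse branches at the origin. Setting $x_i(t):=\gamma_*(t+2\pi i/3)$ for $i=0,1,2$ therefore yields a $2\pi$-periodic choreography in the sense of \eqref{eq:choreo}, collision-free by Marchal, of class $C^\infty$ by the bootstrap, and solving Newton's equation in the rescaled time $s$.

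The step I expect to be most delicate is the collision-avoidance argument in the \emph{symmetric} class: one must check that Marchal's local deformation can be performed inside $\mathcal H_{\mathrm{eight}}$ without destroying $(A_x)$ or $(Y^\star)$. The natural fix is to support the variation on a subinterval whose orbit under the finite symmetry group generated by $(A_x)$ and $(Y^\star)$ consists of pairwise disjoint subintervals of $\T$, and then to extend the perturbation equivariantly; since the group has finite order and Marchal's construction can be localized arbitrarily, the resulting symmetric deformation still strictly decreases $\mathcal A$, and Remark~\ref{rem:marchal-f} converts this into a strict decrease of $\mathcal F$. Every other ingredient is a direct composition of results already proved in the preceding sections.
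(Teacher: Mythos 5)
Your proposal is correct and follows essentially the same route as the paper's own proof: Ekeland minimization of $f=V^q$ on $\mathcal M=\{K=1\}\cap\mathcal H_{\mathrm{eight}}$, the Euler--Lagrange/virial argument with the rescaling $s=\sqrt{\rho}\,t$ to reach Newton's equation, Marchal's criterion transferred to $\mathcal F$ via Remark~\ref{rem:marchal-f}, the bootstrap of Lemma~\ref{lem:bootstrap}, the geometric Lemmas~\ref{lem:pegado-eight} and~\ref{lem:nodo-origen-eight}, and finally the choreographic ansatz. Your explicit equivariant localization of Marchal's deformation (supporting the variation on a subinterval with disjoint group orbit and extending symmetrically) is in fact more detailed than the paper's Step~3, which simply invokes Marchal ``adapted to classes invariant under local reflections'' and defers to the Ferrario--Terracini framework for the equivariant justification.
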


\begin{proof}
\emph{Step 1 (scale invariance and compactness).}
By the homothety invariance of $\mathcal F$ and the equivalence of minima
between $\mathcal F$ and $V^q$ with $q=2/(\alpha+2)$, 
it suffices to minimize $V^q$ over
\[
\mathcal M:=\{x\in\mathcal H_{\mathrm{eight}}:\ K(x)=1\}.
\]
The reinforced Poincaré inequality (Proposition~\ref{prop:poincare-vertical})
controls $\|x\|_{L^2}$ in terms of $K$, 
and together with completeness of $\mathcal M$ in $H^1$ and the compact embedding 
$H^1(\T)\hookrightarrow C^0(\T)$, gives the required compactness.

\emph{Step 2 (Ekeland minimizer).}
Applying Ekeland’s principle on $(\mathcal M,d_{H^1})$ yields a 
minimizer $x_\ast\in\mathcal M$ of $f=V^q$.

\emph{Step 3 (no collisions and regularity).}
By Marchal’s argument (or the Ferrario–Terracini framework) 
adapted to classes invariant under local reflections,
$x_\ast$ has no collisions in the interior of the period. 
Lemma~\ref{lem:bootstrap} then gives $x_\ast\in C^\infty$.

\emph{Step 4 (Euler–Lagrange and Newton).}
There exist constants $m>0$ and $\rho>0$ such that
\[
m\,\ddot x_\ast=\rho\,\nabla U(x_\ast),\qquad 2K=\alpha\,\rho\,V,
\]
with $\rho$ constant. Reparametrizing time by $s=\sqrt{\rho}\,t$
we obtain the standard Newton equation
\( m\,\tfrac{d^2x_\ast}{ds^2}=\nabla U(x_\ast(s)) \).

\emph{Step 5 (geometry and node).}
The symmetries (A$_x$) and (Y$^\star$), together with 
Lemma~\ref{lem:pegado-eight}, reconstruct the whole orbit 
from the fundamental segment $[0,\pi/2]$, 
which meets the $y$–axis orthogonally at $t=\pi/2$. 
Lemma~\ref{lem:nodo-origen-eight} ensures the transverse node at $t=0$ and $t=\pi$,
thus producing a vertical figure–eight.

\emph{Step 6 (choreography).}
Setting $x_i(t)=\gamma(t+\tfrac{2\pi i}{3})$, $i=0,1,2$, 
gives the choreography \eqref{eq:choreo} 
which satisfies Newton’s equations after the time reparametrization.
\end{proof}

The family of minimizers depends smoothly on $\alpha\in(0,2)$, providing a continuous deformation between weak and Newtonian interaction regimes.

\begin{corollary}[Newtonian case $\alpha=1$]
\label{cor:newton-eight}
Under the hypotheses of Theorem~\ref{thm:fig8-eight} with $\alpha=1$,
one obtains a Newtonian $2\pi$–periodic, choreographic, collision–free solution
in the class $\mathcal H_{\mathrm{eight}}$, with a node at the origin for
$t\equiv 0\ (\mathrm{mod}\ \pi)$.
This is precisely the configuration discovered by 
Chenciner and Montgomery (2000).
\end{corollary}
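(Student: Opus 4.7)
The plan is to deduce the corollary as an immediate specialization of Theorem~\ref{thm:fig8-eight} to the Newtonian exponent, followed by an identification of the resulting orbit with the one constructed in \cite{ChencinerMontgomery2000}. Since the entire analytic machinery (coercivity, Ekeland minimization, Marchal's collision exclusion, smooth bootstrap) has already been established in the previous sections, no new estimates are needed; the only genuine work is to verify that the Newtonian potential lies in the admissible range and that the symmetric class used here matches the one used by Chenciner and Montgomery.

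First, I would record that the classical gravitational potential
\[
U(x)=\sum_{1\le i<j\le 3} m^{2}\,|x_i-x_j|^{-1}
\]
is homogeneous of degree $-\alpha$ with $\alpha=1$, so $\alpha\in(0,2)$, and all hypotheses of Theorem~\ref{thm:fig8-eight} are satisfied with three equal masses. The theorem then produces, directly, a loop $\gamma\in\mathcal H_{\mathrm{eight}}$ such that $x_i(t)=\gamma(t+2\pi i/3)$, $i=0,1,2$, is a $2\pi$-periodic choreography which is collision-free (by Marchal's criterion, Lemma~\ref{lem:Marchal}), smooth by Lemma~\ref{lem:bootstrap}, and exhibits the transverse self-crossing at the origin at $t\equiv 0\pmod{\pi}$ coming from Lemma~\ref{lem:nodo-origen-eight}. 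After the time rescaling $s=\sqrt{\rho}\,t$ with $\rho=2K/(\alpha V)=2K/V$ dictated by the virial identity~\eqref{eq:virial-final}, the curve satisfies Newton's equations in their standard form.

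Next, I would identify this object with the Chenciner–Montgomery figure-eight. Their solution is characterized as the minimizer of the Lagrangian action among three-body choreographies of equal masses possessing the dihedral symmetry generated by the half-period antiperiodicity (our $(A_x)$) and the reflection about the $y$-axis at the midpoint of the lobe (our $(Y^\star)$), together with a self-crossing at the origin. Since $\mathcal H_{\mathrm{eight}}$ is defined by exactly these symmetries, and since Proposition~\ref{prop:equivalencias} together with the envelope identity \eqref{eq:minPhiF} shows that our minimizer of $\mathcal F$ corresponds, at the optimal scale $\lambda^{\star}$, to a minimizer of $\mathcal A$ in the same class, the two variational problems coincide. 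The orbit is therefore unique modulo the free parameters left by the invariance group of the problem (overall spatial scale, time translation, and global rotation), and matches the configuration of \cite{ChencinerMontgomery2000}.

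The main — rather mild — obstacle is this last identification step: the statement asserts that our orbit is \emph{precisely} the Chenciner–Montgomery eight, so strictly speaking one must either invoke their uniqueness of the action minimizer within the symmetric class, or compare the two variational formulations directly to rule out a second geometrically distinct minimizer. All other ingredients reduce to bookkeeping against Theorem~\ref{thm:fig8-eight}, so no further analytic effort is required.
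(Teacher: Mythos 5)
Your proof is correct and follows essentially the same route as the paper, which offers no separate argument and treats the corollary as an immediate specialization of Theorem~\ref{thm:fig8-eight} to $\alpha=1$ (the Newtonian potential being homogeneous of degree $-1$, hence within the range $0<\alpha<2$). Your closing caveat is well taken: since neither this paper nor Chenciner--Montgomery establishes uniqueness of the action minimizer in the symmetric class, the assertion that the orbit is ``precisely'' their figure-eight should be read, exactly as you note, as saying that both orbits arise from the same variational characterization (action minimization in the class determined by $(A_x)$--$(Y^\star)$, linked to $\mathcal A$ via Proposition~\ref{prop:equivalencias} and \eqref{eq:minPhiF}) rather than as a proven identification of a unique curve.
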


As shown in Figure~\ref{fig:fig8colinear}, the three equal masses are aligned along the $x$–axis, corresponding to a collinear configuration of the figure–eight choreography for $\alpha = 3/2$.

\begin{figure}[ht]
  \centering
  \includegraphics[width=0.5\textwidth]{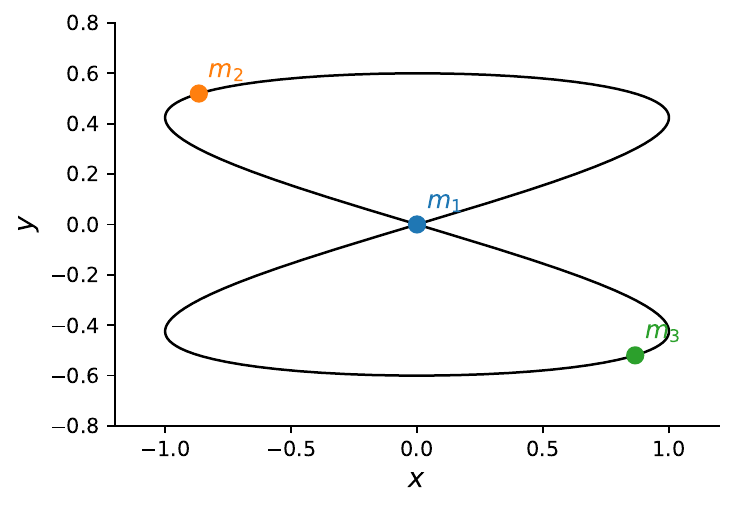}
  \caption{Idealized figure–eight choreography for three equal masses in the symmetric class $\mathcal H_{\mathrm{eight}}$, satisfying $(A_x)$ and $(Y^\star)$, $\alpha=3/2$. 
  The configuration shown corresponds to a collinear instant of the periodic motion.}
  \label{fig:fig8colinear}
\end{figure}

\section*{Conclusion}
The functional $\mathcal F=K^{\alpha/(\alpha+2)}V^{2/(\alpha+2)}$ provides a unified variational framework for all homogeneous potentials with $0<\alpha<2$. 
It allows a direct application of Ekeland’s principle on a compact symmetric manifold, recovering the classical Newtonian figure–eight and extending it to a continuous family of choreographies. 
The same variational setting may be adapted to investigate other symmetry types and larger values of $N$, where the scale–invariant formulation could simplify existence proofs and clarify the relation between symmetry constraints and collision avoidance. 
This perspective opens a path toward a systematic classification of variational choreographies beyond the three–body case.

\section*{Acknowledgments}
The authors wish to thank Prof.~Martha Álvarez-Ramírez for her valuable comments and observations, which greatly contributed to improving the clarity and organization of the manuscript.

% ============================================================

\end{document}